\documentclass[10pt,a4paper]{amsart}
\usepackage{amssymb,amsmath,amsfonts}
\usepackage{tikz}
\textwidth=16.000cm \textheight=22.000cm \topmargin=0.00cm
\oddsidemargin=0.00cm \evensidemargin=0.00cm \headheight=14.4pt
\headsep=1.2500cm \numberwithin{equation}{section}
\hyphenation{semi-stable} \emergencystretch=11pt


\newtheorem{Theorem}{Theorem}[section]
\newtheorem{Proposition}[Theorem]{Proposition}
\newtheorem{cor}[Theorem]{Corollary}
\newtheorem{lemma}[Theorem]{Lemma}
\theoremstyle{remark}
\newtheorem{Definition}[Theorem]{Definition}
\newtheorem{Example}[Theorem]{Example}

\newtheorem{Remark}[Theorem]{Remark}

\begin{document}

\title{$WAP$-biprojectivity of the enveloping dual Banach algebras}

\author[S. F. Shariati]{S. F. Shariati}

\address{Faculty of Mathematics and Computer Science,
	Amirkabir University of Technology, 424 Hafez Avenue, 15914
	Tehran, Iran.}

\email{f.Shariati@aut.ac.ir}

\author[A. Pourabbas]{A. Pourabbas}
\email{arpabbas@aut.ac.ir}

\author[A. Sahami]{A. Sahami}
\address{Department of Mathematics Faculty of Basic Science, Ilam University, P.O. Box 69315-516 Ilam, Iran.}
\email{a.sahami@ilam.ac.ir}

\keywords{Enveloping dual Banach algebra, $WAP$-biprojective, Connes amenable, Connes biprojective}

\subjclass[2010]{Primary 46M10,  46H20  Secondary 46H25, 43A10,.}


\begin{abstract}
In this paper, we introduce a new notion of biprojectivity, called $WAP$-biprojectivity for $F(\mathcal{A})$, the enveloping dual Banach algebra associated to a Banach algebra $\mathcal{A}$. We find some relations between  Connes biprojectivity, Connes amenability and this new notion. We show that, for a given dual Banach algebra $\mathcal{A}$, if $F(\mathcal{A})$ is Connes amenable, then $\mathcal{A}$ is Connes amenable.

 For an infinite commutative compact group $G$, we show that the convolution Banach algebra $F(L^2(G))$ is not $WAP$-biprojective. Finally, we provide some examples of the enveloping dual Banach algebras and we study their $WAP$-biprojectivity and Connes amenability. 
\end{abstract}

\maketitle

\section{Introduction and Preliminaries}
Biprojectivity is one of the most important notions in Banach homology. In fact a Banach algebra $\mathcal{A}$ is biprojective if there exists a bounded $\mathcal{A}$-bimodule morphism $\rho:\mathcal{A}\rightarrow \mathcal{A}\hat{\otimes} \mathcal{A}$  such that $\pi_{\mathcal{A}}\circ\rho(a)=a,$ for every $a\in \mathcal{A}$.
It is well-known that the measure algebra $M(G)$ on a locally compact group $G$ is biprojective if and only if $G$ is finite, for more details see \cite{Runde:2002}. 

 There exists a class of Banach algebras which is called dual Banach algebras.  This category of Banach algebras were defined by Runde \cite{Runde:2001}. It is clear that every Banach algebra is not always dual Banach algebra but recently Choi, {\it et
al.} showed that there exists a dual Banach algebra associated to an arbitrary Banach algebra  which is called the enveloping dual Banach algebra \cite{Choi:2014}. Indeed,  
 let $\mathcal{A}$ be a Banach algebra and let $E$ be a Banach $\mathcal{A}$-bimodule. An element $x\in{E}$ is called weakly almost periodic if the module maps $\mathcal{A}\rightarrow{E}$; $a\mapsto{a\cdot{x}}$ and $a\mapsto{x\cdot{a}}$ are weakly compact. The set of all weakly almost periodic elements of $E$ is denoted by $WAP(E)$ which is a norm closed sub-bimodule of $E$ \cite[Definition 4.1]{Runde:2004}. 
For a Banach algebra $\mathcal{A}$, we write $F(\mathcal{A})_{\ast}$ for the $\mathcal{A}$-bimodule $WAP(\mathcal{A}^{\ast})$ which is the left introverted subspace of $\mathcal{A}^{\ast}$ in the sense of \cite[\textsection1]{Lau:97}. Runde observed that $F(\mathcal{A})=WAP(\mathcal{A}^{\ast})^{\ast}$ is a dual Banach algebra with the first Arens product inherited from $\mathcal{A}^{\ast\ast}$. He also showed that $F(\mathcal{A})$ is a canonical dual Banach algebra associated to $\mathcal{A}$ \cite[Theorem 4.10]{Runde:2004}. Choi, {\it et al.} in \cite{Choi:2014} called $F(\mathcal{A})$ the enveloping dual Banach algebra associated to $\mathcal{A}$. They  showed that if $\mathcal{A}$ is a Banach algebra and $X$ is a Banach $\mathcal{A}$-bimodule, then $F_{\mathcal{A}}(X)=WAP(X^{\ast})^{\ast}$ is a normal dual $F(\mathcal{A})$-bimodule \cite[Theorem 4.3]{Choi:2014}. Also they studied the Connes amenability of $F(\mathcal{A})$. Indeed they showed that for a given Banach algebra $\mathcal{A}$, the dual Banach algebra $F(\mathcal{A})$ is Connes amenable if and only if $\mathcal{A}$ admits a $WAP$-virtual diagonal \cite[Theorem 6.12]{Choi:2014}.

Motivated by these results, first we introduce the notion of $WAP$-biprojectivity for the enveloping dual Banach algebra associated to a Banach algebra $\mathcal{A}$.  Next for a Banach algebra $\mathcal{A}$ we investigate the relation between $WAP$-biprojectivity  of $F(\mathcal{A})$ with biprojectivity of $\mathcal{A}$ and also for a a dual Banach algebra $\mathcal{A}$ we study the relation between $WAP$-biprojectivity  of $F(\mathcal{A})$ with Connes biprojectivity of $\mathcal{A}$. We conclude that the Connes amenability of $F(\mathcal{A})$ implies the Connes amenability of   dual Banach algebra $\mathcal{A}$. 
We show that for a locally compact group $G$, if $F(M(G))$ is $WAP$-biprojective, then $G$ is amenable and for an infinite commutative compact group $G$ we show that 
$F(L^{2}(G))$ is not  $WAP$-biprojective.
Finally, we provide some examples of the enveloping dual Banach algebras associated to the certain Banach algebras and we study  their $WAP$-biprojectivity and Connes amenability.

 Let $\mathcal{A}$ be a Banach algebra. An $\mathcal{A}$-bimodule $E$ is called dual if there is a closed submodule ${E}_{\ast}$ of ${E}^{\ast}$ such that $E=(E_{\ast})^{\ast}$. The Banach algebra $\mathcal{A}$ is called dual if it is dual as a Banach $\mathcal{A}$-bimodule. A dual Banach $\mathcal{A}$-bimodule $E$ is normal, if for each $x\in{E}$ the module maps $\mathcal{A}\longrightarrow{E}$; ${a}\mapsto{a}\cdot{x}$ and ${a}\mapsto{x}\cdot{a}$ are $wk^\ast$-$wk^\ast$ continuous. Let $\mathcal{A}$ be a Banach algebra and let $E$ be a Banach $\mathcal{A}$-bimodule. A bounded linear map $D:\mathcal{A}\longrightarrow{E} $ is called a bounded derivation if for every $a,b\in\mathcal{A}$, $D(ab)=a\cdot{D(b)}+D(a)\cdot{b}$. A derivation $D:\mathcal{A}\longrightarrow{E}$ is called inner if there exists an element $x$ in $E$ such that $D(a)=a\cdot{x}-x\cdot{a}$ ($a\in{\mathcal{A}}$). A dual Banach algebra $\mathcal{A}$ is said to be Connes amenable if for every normal dual Banach $\mathcal{A}$-bimodule $E$, every $wk^\ast$-continuous derivation $D:\mathcal{A}\longrightarrow{E}$ is inner. For a given dual Banach algebra $\mathcal{A}$ and a Banach $\mathcal{A}$-bimodule $E$, $\sigma{wc}(E)$ denotes the set of all elements $x\in{E}$ such that the module maps $\mathcal{A}\rightarrow{E}$; ${a}\mapsto{a}\cdot{x}$ and ${a}\mapsto{x}\cdot{a}$
 are $wk^\ast$-$wk$-continuous, one can see that, it is a closed submodule of $E$ (see \cite{Runde:2001} and \cite{Runde:2004} for more details). For a given Banach algebra $\mathcal{A}$, consider the product morphism $\pi_{\mathcal{A}}:\mathcal{A}\hat{\otimes}\mathcal{A}\longrightarrow\mathcal{A}$ given by $\pi_{\mathcal{A}}(a\otimes{b})=ab$ for every $a,b\in{\mathcal{A}}$.  Since $\sigma{wc}(\mathcal{A}_{\ast})=\mathcal{A}_{\ast}$, the adjoint of $\pi_{\mathcal{A}}$ maps $\mathcal{A}_{\ast}$ into $\sigma{wc}(\mathcal{A}\hat{\otimes}\mathcal{A})^{\ast}$. Therefore, $\pi_{\mathcal{A}}^{\ast\ast}$ drops to an $\mathcal{A}$-bimodule morphism $\pi_{\sigma{wc}}:(\sigma{wc}(\mathcal{A}\hat{\otimes}\mathcal{A})^{\ast})^{\ast}\longrightarrow\mathcal{A}$. ٍEvery element $M\in{(\sigma{wc}(\mathcal{A}\hat{\otimes}\mathcal{A})^{\ast})^*}$ satisfying
 \begin{center}
 	$a\cdot{M}=M\cdot{a}\quad$ and $\quad{a}\pi_{\sigma{wc}}M=a\quad(a\in{\mathcal{A}})$,
 \end{center}
 is called a $\sigma{wc}$-virtual diagonal for $\mathcal{A}$. Runde showed that a dual Banach algebra $\mathcal{A}$ is Connes amenable if and only if there is a $\sigma{wc}$-virtual diagonal for $\mathcal{A}$ \cite[Theorem 4.8]{Runde:2004}.\\
 Let $\Delta_{WAP}:{F}_{\mathcal{A}}(\mathcal{A}\hat{\otimes}\mathcal{A})\longrightarrow{F}(\mathcal{A})$ be the $wk^{\ast}$-$wk^{\ast}$ continuous $\mathcal{A}$-bimodule map induced by $\pi_\mathcal{A}:\mathcal{A}\hat{\otimes}\mathcal{A}\longrightarrow\mathcal{A}$. Note that $\Delta_{WAP}$ is also an $F(\mathcal{A})$-bimodule map (see \cite[Corollary 5.2]{Choi:2014} for more details). Composing the canonical inclusion map $\mathcal{A}\hookrightarrow\mathcal{A}^{\ast\ast}$ with the adjoint of the inclusion map $F(\mathcal{A})_{\ast}\hookrightarrow\mathcal{A}^{\ast}$, we obtain a continuous homomorphism of Banach algebras $\eta_{\mathcal{A}}:\mathcal{A}\longrightarrow{F}(\mathcal{A})$ which has a $wk^{\ast}$-dense range. We write $\bar{a}$ instead of $\eta_{\mathcal{A}}(a)$ \cite[Definition 6.4]{Choi:2014}. Let $\mathcal{A}$ be a Banach algebra. An element $M\in{{F}_{\mathcal{A}}(\mathcal{A}\hat{\otimes}\mathcal{A})}$ is called a $WAP$-virtual diagonal for $\mathcal{A}$ if for every $a\in{\mathcal{A}}$
\begin{equation*}
a\cdot{M}=M\cdot{a}\quad \hbox{and} \quad\Delta_{WAP}(M)\cdot{a}=\bar{a}.
\end{equation*}
 The notion of $\varphi$-Connes amenability for a dual Banach algebra $\mathcal{A}$, where $\varphi$ is a ${wk}^{\ast}$-continuous multiplicative linear functional (character) on $\mathcal{A}$, was introduced by Mahmoodi  and some characterizations were given in \cite{Mahmoodi:2014}. We say that $\mathcal{A}$ is $\varphi$-Connes amenable if there exists a bounded linear functional $m$ on  $\sigma{wc}({\mathcal{A}}^{\ast})$ satisfying $m(\varphi)=1$ and $m(f\cdot{a})=\varphi(a)m(f)$ for every $a\in{\mathcal{A}}$ and $f\in{\sigma{wc}({\mathcal{A}}^{\ast})}$. The concept of $\varphi$-Connes amenability was characterized through vanishing of the cohomology group $\mathcal{H}^{1}_{wk^{*}}(\mathcal{A},E)$ for certain normal dual Banach $\mathcal{A}$-bimodule $E$.
By \cite[Theorem 2.2]{Mahmoodi:2014}, we conclude that every Connes amenable Banach algebra is $\varphi$-Connes amenable, where $\varphi$ is a ${wk}^{\ast}$-continuous character on $\mathcal{A}$.

At the following we  give the definition of our new notion:
\begin{Definition}\label{D5.1}
	Let $\mathcal{A}$ be a Banach algebra. Then $F(\mathcal{A})$ is called $WAP$-biprojective if there exists a $wk^{\ast}$-$wk^{\ast}$ continuous $\mathcal{A}$-bimodule morphism $\rho :{F}(\mathcal{A})\longrightarrow{F}_{\mathcal{A}}(\mathcal{A}\hat{\otimes}\mathcal{A})$ such that $\Delta_{WAP}\circ\rho=id_{F(\mathcal{A})}$. 
\end{Definition}
\section{$WAP$-biprojectivity of the enveloping dual Banach algebras}
In this section we study general property of the  $WAP$-biprojective Banach algebras and we investigate the relation of this notion with the other notions of Connes amenability on dual Banach algebras.

Let $\mathcal{A}$ be a Banach algebra. An $\mathcal{A}$-bimodule $X$ is called contractive if for every $x\in{X}$ and $a\in{\mathcal{A}}$
\begin{equation*}
\Vert{a}\cdot{x}\Vert\leq\Vert{a}\Vert\Vert{x}\Vert\quad\hbox{and}\quad\Vert{x}\cdot{a}\Vert\leq\Vert{x}\Vert\Vert{a}\Vert.
\end{equation*}
Following \cite[\textsection3]{Choi:2014}, let $\mathcal{A}$ be a Banach algebra and let $X$ be a contractive  $\mathcal{A}$-bimodule. Then  $\mathcal{A}\oplus_{\ltimes}X$ is called the triangular Banach algebra associated to $(\mathcal{A},X)$ equipped with $\ell^1$-norm and the product
\begin{equation*}
(a,x)\cdot(b,y)\colon=(ab,a\cdot{y}+x\cdot{b})\qquad(a,b\in{\mathcal{A}}, x,y\in{X}).
\end{equation*}
\begin{Remark}
For technical reasons, Choi,  {\it et al.} worked with bimodules and normal dual bimodules that are contractive \cite{Choi:2014}. Note that for a given Banach algebra $\mathcal{A}$, if $X$ is a Banach $\mathcal{A}$-bimodule, then by a standard renorming argument there exists a contractive $\mathcal{A}$-bimodule $Y$ which is isomorphism to $X$, moreover if $M$ is a normal dual $\mathcal{A}$-bimodule, where $\mathcal{A}$ is a dual Banach algebra, then there exists a contractive, normal dual $\mathcal{A}$-bimodule $N$ which is $wk^*$-isomorphism to $M$ \cite[Lemma 2.4]{Choi:2014}. So without loss of generality, it is possible to extract the results of \cite{Choi:2014} in terms of wider classes of bimodule without contractive condition (see \cite[\textsection2.1]{Choi:2014} for more details).
\end{Remark}
\begin{lemma}\label{l5.3}
	Let $\mathcal{A}$ be a Banach algebra and let $X$ be a Banach $\mathcal{A}$-bimodule. Then for every $a\in{\mathcal{A}}$, $\eta\in{F(\mathcal{A})}$ and $\psi\in{F_{\mathcal{A}}(X)}$ we have
	\begin{enumerate}
		\item [(i)] $a\cdot\eta=\bar{a}\square\eta \quad(\eta\cdot{a}=\eta\square\bar{a})$,
		\item[(ii)] $a\cdot\psi=\bar{a}\bullet\psi\quad(\psi\cdot{a}=\psi\bullet\bar{a})$,
	\end{enumerate}
	where $\square$ and $\bullet$ are the first Arens product in $F(\mathcal{A})$ and the module action of $F(\mathcal{A})$ on $F_{\mathcal{A}}(X)$, respectively.
\end{lemma}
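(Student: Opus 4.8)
The plan is to unwind the definitions of the maps $\eta_{\mathcal{A}}:\mathcal{A}\to F(\mathcal{A})$, the first Arens product $\square$ on $F(\mathcal{A})=WAP(\mathcal{A}^{\ast})^{\ast}$, and the module action $\bullet$ of $F(\mathcal{A})$ on $F_{\mathcal{A}}(X)=WAP(X^{\ast})^{\ast}$, and then to check that pairing both sides of each asserted identity against an arbitrary functional in the relevant predual gives the same value. Recall that $\eta_{\mathcal{A}}$ is the composition of $\mathcal{A}\hookrightarrow\mathcal{A}^{\ast\ast}$ with the adjoint of $F(\mathcal{A})_{\ast}\hookrightarrow\mathcal{A}^{\ast}$, so for $a\in\mathcal{A}$ and $f\in WAP(\mathcal{A}^{\ast})$ we have $\langle\bar a,f\rangle=\langle f,a\rangle$. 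Since the Arens product on $F(\mathcal{A})$ is inherited from the first Arens product on $\mathcal{A}^{\ast\ast}$ (via the quotient/restriction identification), and the module action $\bullet$ on $F_{\mathcal{A}}(X)$ is by construction the one induced from the $\mathcal{A}^{\ast\ast}$-action on $X^{\ast\ast}$, the statements (i) and (ii) should both reduce to the elementary fact that the canonical embedding $\mathcal{A}\hookrightarrow\mathcal{A}^{\ast\ast}$ is a homomorphism for the first Arens product and intertwines the natural module actions.

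Concretely, for (i): fix $a\in\mathcal{A}$, $\eta\in F(\mathcal{A})$, and $f\in WAP(\mathcal{A}^{\ast})$. On the one hand $\langle a\cdot\eta,f\rangle=\langle\eta,f\cdot a\rangle$ by definition of the (left) $\mathcal{A}$-module action on the dual space $F(\mathcal{A})$. On the other hand, $\langle\bar a\,\square\,\eta,f\rangle=\langle\eta,f\cdot\bar a\rangle$ where $f\cdot\bar a\in WAP(\mathcal{A}^{\ast})$ is the result of the module action of $F(\mathcal{A})$ on its predual; and one checks directly from the formula for $\bar a$ that $f\cdot\bar a=f\cdot a$ as elements of $\mathcal{A}^{\ast}$ (both send $b\mapsto\langle f,ab\rangle$ for $b\in\mathcal{A}$, using $wk^{\ast}$-density of $\eta_{\mathcal{A}}(\mathcal{A})$ to extend to all of $F(\mathcal{A})$ if needed). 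Hence the two sides agree on all of $WAP(\mathcal{A}^{\ast})$, giving $a\cdot\eta=\bar a\,\square\,\eta$; the right-hand identity $\eta\cdot a=\eta\,\square\,\bar a$ is symmetric. For (ii) the argument is identical with $WAP(X^{\ast})$ in place of $WAP(\mathcal{A}^{\ast})$ and the action $\bullet$ in place of $\square$, using that $F_{\mathcal{A}}(X)$ is a normal dual $F(\mathcal{A})$-bimodule whose $F(\mathcal{A})$-action restricts along $\eta_{\mathcal{A}}$ to the original $\mathcal{A}$-action (this is precisely the content of \cite[Theorem 4.3]{Choi:2014}).

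I expect the only real subtlety to be bookkeeping: one must be careful about which Arens product is in play and about the direction in which the various dualities and module actions are taken, so that the pairings $\langle f\cdot a,b\rangle=\langle f,ab\rangle$ versus $\langle a\cdot f,b\rangle=\langle f,ba\rangle$ are not confused, and about verifying that all the auxiliary elements ($f\cdot a$, $f\cdot\bar a$, etc.) genuinely lie in the $WAP$-subspace rather than merely in the full dual — but this membership is guaranteed because $WAP(\mathcal{A}^{\ast})$ and $WAP(X^{\ast})$ are sub-bimodules by \cite[Definition 4.1]{Runde:2004} and \cite[Theorem 4.3]{Choi:2014}. There is no deep obstacle here; the lemma is a compatibility statement asserting that the abstract $F(\mathcal{A})$-module structure, when pulled back along $\eta_{\mathcal{A}}$, recovers the concrete $\mathcal{A}$-module structure, and it follows by a routine diagram chase once the definitions are spelled out.
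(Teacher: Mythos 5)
Your argument for part (i) is essentially the paper's own proof: both pair against an arbitrary $f\in WAP(\mathcal{A}^{\ast})$, unwind the first Arens product and the dual module actions, and use $\langle\bar a,f\rangle=\langle f,a\rangle$ together with the identification $f\cdot\bar a=f\cdot a$ (equivalently, in the paper's ordering, $\bar a\cdot f=a\cdot f$). That part is fine.

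For part (ii), however, there is a real gap. You assert that the argument is ``identical with $WAP(X^{\ast})$ in place of $WAP(\mathcal{A}^{\ast})$'' and that the compatibility of the $F(\mathcal{A})$-action with the $\mathcal{A}$-action ``is precisely the content of'' \cite[Theorem 4.3]{Choi:2014}. But the module action $\bullet$ of $F(\mathcal{A})$ on $F_{\mathcal{A}}(X)$ is not given a priori by an Arens-type pairing formula on $WAP(X^{\ast})^{\ast}$; it is \emph{defined} (in Choi et al.) by embedding $F_{\mathcal{A}}(X)$ as the corner of the enveloping algebra of the triangular algebra $\mathcal{A}\oplus_{\ltimes}X$, via the identity $(0,\bar a\bullet\psi)=(\bar a,0)\square(0,\psi)$ in $F(\mathcal{A}\oplus_{\ltimes}X)$. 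The content of part (ii) is exactly the computation that $(\bar a,0)\square(0,\psi)=(0,a\cdot\psi)$, which the paper carries out by pairing against $(f,g)\in WAP(\mathcal{A}^{\ast})\oplus WAP(X^{\ast})$ and checking $\langle(f,g)\cdot(a,0),\,\cdot\,\rangle=\langle(f\cdot a,g\cdot a),\,\cdot\,\rangle$. Citing Theorem 4.3 as already containing the restriction statement comes close to assuming the conclusion of (ii). The gap is repairable --- you only need to insert the triangular-algebra computation --- but as written the proof of (ii) does not engage with how $\bullet$ is actually constructed.
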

\begin{proof}
	(i) For every $f\in{WAP(\mathcal{A}^{\ast})}$,
	\begin{equation*}
	\langle{f},\bar{a}\square\eta\rangle=\langle\eta\cdot{f},\bar{a}\rangle=\langle{a},\eta\cdot{f}\rangle=\langle{f}\cdot{a},\eta\rangle=\langle{f},a\cdot\eta\rangle\quad(a\in{\mathcal{A}}, \eta\in{F(\mathcal{A})}).
	\end{equation*}
	Also
	\begin{equation*}
	\langle{b},\bar{a}\cdot{f}\rangle=\langle{f}\cdot{b},\bar{a}\rangle=\langle{a},{f}\cdot{b}\rangle=\langle{b},a\cdot{f}\rangle\quad(a,b\in{\mathcal{A}}),
	\end{equation*}
	similarly
	\begin{equation*}
	\langle{f},\eta\square\bar{a}\rangle=\langle\bar{a}\cdot{f},\eta\rangle=\langle{a}\cdot{f},\eta\rangle=\langle{f},\eta\cdot{a}\rangle\quad(a\in{\mathcal{A}}, \eta\in{F(\mathcal{A})}).
	\end{equation*}
	(ii) According to \cite[Theorem 4.3]{Choi:2014},
	\begin{equation*}
	(0,\bar{a}\bullet\psi)=(\bar{a},0)\square(0,\psi)\quad\hbox{in }{F}(\mathcal{A}\oplus_\ltimes{X}).
	\end{equation*} 
	So for every $f\in{WAP(\mathcal{A}^{\ast})}$ and $g\in{WAP({X}^{\ast})}$ we have
	\begin{equation*}
	\begin{split}
	\langle(f,g),(\bar{a},0)\square(0,\psi)\rangle&=\langle(0,\psi)\cdot(f,g),(\overline{a,0})\rangle\\ &=\langle(a,0),(0,\psi)\cdot(f,g)\rangle\\&=\langle(f,g)\cdot(a,0),(0,\psi)\rangle\quad(a\in{\mathcal{A}},  \psi\in{F_{\mathcal{A}}(X)}).
	\end{split}
	\end{equation*}
	Also
	\begin{equation*}
	\begin{split}
	\langle(b\cdot x),(f,g)\cdot(a,0)\rangle&=\langle(ab,a\cdot x+0\cdot b),(f,g)\rangle=\langle ab,f\rangle+\langle a\cdot x,g \rangle\\&=\langle b,f\cdot a\rangle+\langle x,g\cdot a\rangle=\langle(b.x),(f\cdot a,g\cdot a)\rangle\quad(a,b\in{\mathcal{A}}, x\in{X}).
	\end{split}
	\end{equation*}
	It follows that
	\begin{equation*}
	\langle(f,g),(\bar{a},0)\square(0,\psi)\rangle=\langle(f\cdot{a},g\cdot{a}),(0,\psi)\rangle=\langle{f},a\cdot0\rangle+\langle{g},a\cdot\psi\rangle=\langle(f,g),(0,a\cdot\psi)\rangle.
	\end{equation*}
	The proof for the right action is similar.
\end{proof}

\begin{Remark}
	Consider the $\mathcal{A}$-bimodule morphism $\rho$ as in Definition \ref{D5.1}: 
	\begin{enumerate}
		\item [(i)] 	Since $\eta_{\mathcal{A}}:\mathcal{A}\longrightarrow{F}(\mathcal{A})$ has a $wk^{\ast}$-dense range, for every $\psi\in{F}(\mathcal{A})$ there exist a bounded net $(u_{\alpha})$ in ${\mathcal{A}}$ such that $\psi=wk^*$-$\lim\limits_{\alpha}\bar{u}_{\alpha}$. Also since $F(\mathcal{A})$ and ${F}_{\mathcal{A}}(\mathcal{A}\hat{\otimes}\mathcal{A})$ are normal as Banach $F(\mathcal{A})$-bimodules and $\rho$ is $wk^{\ast}$-$wk^{\ast}$ continuous, Lemma \ref{l5.3} implies that for every $\phi\in{F(\mathcal{A})}$ we have
		\begin{equation*}
		\begin{split}
		\psi\bullet\rho(\phi)&=wk^*\hbox{-}\lim\limits_{\alpha}\bar{u}_{\alpha}\bullet\rho(\phi)=wk^*\hbox{-}\lim\limits_{\alpha}(\bar{u}_{\alpha}\bullet\rho(\phi))=wk^*\hbox{-}\lim\limits_{\alpha}({u}_{\alpha}\cdot\rho(\phi))\\&=wk^*\hbox{-}\lim\limits_{\alpha}\rho({u}_{\alpha}\cdot\phi)=wk^*\hbox{-}\lim\limits_{\alpha}\rho(\bar{u}_{\alpha}\square\phi)=\rho(wk^*\hbox{-}\lim\limits_{\alpha}(\bar{u}_{\alpha}\square\phi))\\&=\rho(\psi\square\phi),
		\end{split}
		\end{equation*}
		where $\bullet$ denotes the module action of $F(\mathcal{A})$ on $F_{\mathcal{A}}(\mathcal{A}\hat{\otimes}\mathcal{A})$. So by similarity for the right action, $\rho$ is an $F(\mathcal{A})$-bimodule morphism and also $\mathcal{A}$-bimodule morphism.
		\item[(ii)] Using \cite[Corollary 3.1.12]{Rob:98}, $\rho$ is norm continuous, moreover $\rho$ is $wk^{\ast}$-$wk^{\ast}$ continuous.  
	\end{enumerate}  
\end{Remark}
Choi {\it et al.} \cite[Theorem 6.12]{Choi:2014} for a Banach algebra $\mathcal{A}$ showed that $F(\mathcal{A})$ is Connes-amenable if and only if $\mathcal{A}$ has a $WAP$-virtual diagonal. In the following theorem we extend an analogue result for $WAP$-biprojectivity:
\begin{Theorem}
	Let $\mathcal{A}$ be a Banach algebra. Then the followings are equivalent;
	\begin{enumerate}
		\item [(i)]$F(\mathcal{A})$ is $WAP$-biprojective with an identity,
		\item[(ii)] $\mathcal{A}$ has a $WAP$-virtual diagonal.
	\end{enumerate}
\end{Theorem}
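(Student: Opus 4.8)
The plan is to mimic the classical fact that a Banach algebra is biprojective and unital exactly when it possesses a diagonal, transferred to the present setting by using Lemma \ref{l5.3} to translate between the $\mathcal{A}$-module actions and the $F(\mathcal{A})$-module actions, together with the facts that $\Delta_{WAP}$ is an $F(\mathcal{A})$-bimodule map and that $F_{\mathcal{A}}(\mathcal{A}\hat{\otimes}\mathcal{A})$ is a normal dual $F(\mathcal{A})$-bimodule.

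For (i)$\Rightarrow$(ii): denote by $e$ the identity of $F(\mathcal{A})$ and let $\rho$ be a morphism as in Definition \ref{D5.1}. Put $M:=\rho(e)\in F_{\mathcal{A}}(\mathcal{A}\hat{\otimes}\mathcal{A})$. Since $e$ is the identity, Lemma \ref{l5.3}(i) gives $a\cdot e=\bar a\square e=\bar a=e\square\bar a=e\cdot a$ for each $a\in\mathcal{A}$, so, as $\rho$ is an $\mathcal{A}$-bimodule morphism, $a\cdot M=\rho(a\cdot e)=\rho(\bar a)=\rho(e\cdot a)=M\cdot a$; and since $\Delta_{WAP}\circ\rho=id_{F(\mathcal{A})}$ and $\Delta_{WAP}$ is an $\mathcal{A}$-bimodule map, $\Delta_{WAP}(M)\cdot a=e\cdot a=\bar a$. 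Hence $M$ is a $WAP$-virtual diagonal. This is the easy implication.

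For (ii)$\Rightarrow$(i): let $M$ be a $WAP$-virtual diagonal and set $e:=\Delta_{WAP}(M)\in F(\mathcal{A})$. The first and key step is to check that $e$ is an identity for $F(\mathcal{A})$: applying the $\mathcal{A}$-bimodule map $\Delta_{WAP}$ to $a\cdot M=M\cdot a$ yields $a\cdot e=e\cdot a$, and combining this with $\Delta_{WAP}(M)\cdot a=\bar a$ and Lemma \ref{l5.3}(i) gives $\bar a\square e=e\square\bar a=\bar a$ for every $a\in\mathcal{A}$; then, writing an arbitrary $\phi\in F(\mathcal{A})$ as a $wk^{\ast}$-limit of a bounded net $(\bar u_{\alpha})$ in $\eta_{\mathcal{A}}(\mathcal{A})$ (possible since $\eta_{\mathcal{A}}$ has $wk^{\ast}$-dense range) and using that multiplication in the dual Banach algebra $F(\mathcal{A})$ is separately $wk^{\ast}$-$wk^{\ast}$ continuous, one passes to the limit to get $e\square\phi=\phi\square e=\phi$. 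Next, define $\rho:F(\mathcal{A})\to F_{\mathcal{A}}(\mathcal{A}\hat{\otimes}\mathcal{A})$ by $\rho(\phi):=M\bullet\phi$, where $\bullet$ denotes the module action of $F(\mathcal{A})$ on $F_{\mathcal{A}}(\mathcal{A}\hat{\otimes}\mathcal{A})$. Normality of this bimodule gives that $\rho$ is $wk^{\ast}$-$wk^{\ast}$ continuous; associativity of the module actions together with the centrality relation $\bar a\bullet M=M\bullet\bar a$ (which is just $a\cdot M=M\cdot a$ rewritten via Lemma \ref{l5.3}(ii)) shows that $\rho$ is an $\mathcal{A}$-bimodule morphism; and since $\Delta_{WAP}$ is an $F(\mathcal{A})$-bimodule map and $e$ is the identity, $\Delta_{WAP}(\rho(\phi))=\Delta_{WAP}(M)\square\phi=e\square\phi=\phi$, i.e. $\Delta_{WAP}\circ\rho=id_{F(\mathcal{A})}$. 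Therefore $F(\mathcal{A})$ is $WAP$-biprojective with an identity.

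The bimodule-action bookkeeping via Lemma \ref{l5.3} is routine. The step I expect to be the main obstacle is establishing in (ii)$\Rightarrow$(i) that $\Delta_{WAP}(M)$ is a genuine two-sided identity of $F(\mathcal{A})$ and not merely a unit for the $wk^{\ast}$-dense subalgebra $\eta_{\mathcal{A}}(\mathcal{A})$; this is the only place where the $wk^{\ast}$-density of $\eta_{\mathcal{A}}$ and the separate $wk^{\ast}$-continuity of the product must really be invoked to pass to a limit.
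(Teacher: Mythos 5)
Your argument is correct and follows essentially the same route as the paper: both directions hinge on Lemma \ref{l5.3}, taking $M=\rho(e)$ for (i)$\Rightarrow$(ii) and $\rho(\phi)=M\bullet\phi$ (the paper uses the mirror-image $\phi\bullet M$) for (ii)$\Rightarrow$(i). The only cosmetic difference is that you verify explicitly, via $wk^{\ast}$-density of $\eta_{\mathcal{A}}(\mathcal{A})$ and separate $wk^{\ast}$-continuity of the product, that $\Delta_{WAP}(M)$ is a two-sided identity, where the paper delegates this to a cited remark of Choi, Samei and Stokke.
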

\begin{proof}
	(i)$\Rightarrow$(ii) Suppose that $F(\mathcal{A})$ is $WAP$-biprojective with an identity $e$. Then there exists a $wk^{\ast}$-$wk^{\ast}$ continuous $\mathcal{A}$-bimodule morphism $\rho:{F}(\mathcal{A})\longrightarrow{F}_{\mathcal{A}}(\mathcal{A}\hat{\otimes}\mathcal{A})$ such that $\Delta_{WAP}\circ\rho=id_{F(\mathcal{A})}$. Let $M=\rho(e)$. Then $M$ is an element in ${F}_{\mathcal{A}}(\mathcal{A}\hat{\otimes}\mathcal{A})$ and by Lemma \ref{l5.3} (i) for every $a\in{\mathcal{A}}$, we have
	\begin{equation*}
	a\cdot{M}=a\cdot\rho(e)=\rho(a\cdot{e})=\rho(\bar{a}\square{e})=\rho(e\square\bar{a})=\rho(e\cdot{a})=\rho(e)\cdot{a}=M\cdot{a},
	\end{equation*}
	and
	\begin{equation*}
	\Delta_{WAP}(M)\cdot{a}=(\Delta_{WAP}\circ\rho(e))\cdot{a}=e\cdot{a}=e\square\bar{a}=\bar{a}.
	\end{equation*}
	So $M$ is a $WAP$-virtual diagonal for $F(\mathcal{A})$.\\
	(ii)$\Rightarrow$(i) Suppose that there exists a $WAP$-virtual diagonal $M$ for $\mathcal{A}$. We define                                                                                                              $\rho:{F}(\mathcal{A})\longrightarrow{F}_{\mathcal{A}}(\mathcal{A}\hat{\otimes}\mathcal{A})$ by $\rho(\eta)=\eta\bullet{M}$, for every $\eta\in{F(\mathcal{A})}$, where $\bullet$ denotes the module action of $F(\mathcal{A})$ on ${F}_{\mathcal{A}}(\mathcal{A}\hat{\otimes}\mathcal{A})$. Since ${F}_{\mathcal{A}}(\mathcal{A}\hat{\otimes}\mathcal{A})$ is a normal dual $F(\mathcal{A})$-bimodule \cite[Theorem 4.3]{Choi:2014}, $\rho$ is $wk^{\ast}$-$wk^{\ast}$ continuous. Using Lemma \ref{l5.3}, for every $a\in{\mathcal{A}}$ and $\eta\in{F(\mathcal{A})}$ we have
	\begin{equation*}
	a\cdot\rho(\eta)=a\cdot(\eta\bullet{M})=\bar{a}\bullet(\eta\bullet{M})=(\bar{a}\square \eta)\bullet{M}=(a\cdot\eta)\bullet{M}=\rho(a\cdot\eta),
	\end{equation*}
	On the other hand, since $M$ is a $WAP$-virtual diagonal, by \cite[Remark 6.5]{Choi:2014} we have
	\begin{equation*}
	\rho(\eta)\cdot{a}=(\eta\bullet{M})\cdot{a}=({M}\bullet\eta)\bullet\bar{a}={M}\bullet(\eta\square\bar{a})={M}\bullet(\eta\cdot{a})=(\eta\cdot{a})\bullet{M}=\rho(\eta\cdot{a}).
	\end{equation*}
	So $\rho$ is an $\mathcal{A}$-bimodule morphism. Since $\Delta_{WAP}$ is an  $F(\mathcal{A})$-bimodule morphism \cite[Corollary 5.2]{Choi:2014}, for every $\eta\in{F(\mathcal{A})}$ we have
	\begin{equation*}
	\Delta_{WAP}\circ\rho(\eta)=\Delta_{WAP}(\eta\bullet{M})=\eta\bullet\Delta_{WAP}(M)=\eta.
	\end{equation*}
	Therefore $F(\mathcal{A})$ is $WAP$-biprojective and it is easy to see that $\Delta_{WAP}(M)$ is an identity for $F(\mathcal{A})$ \cite[Remark 6.5]{Choi:2014}. 
\end{proof}
\begin{cor}\label{c2.5}
Let $\mathcal{A}$ be a Banach algebra. $F(\mathcal{A})$ is Connes amenable if and only if $F(\mathcal{A})$ is $WAP$-biprojective with an identity.  
\end{cor}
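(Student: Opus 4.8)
The plan is to obtain this corollary essentially for free by chaining two equivalences: the one established in the Theorem just above, and \cite[Theorem 6.12]{Choi:2014}. Recall that \cite[Theorem 6.12]{Choi:2014} asserts that $F(\mathcal{A})$ is Connes amenable precisely when $\mathcal{A}$ admits a $WAP$-virtual diagonal, while the preceding Theorem asserts that $F(\mathcal{A})$ is $WAP$-biprojective with an identity precisely when $\mathcal{A}$ admits a $WAP$-virtual diagonal. Since both statements refer to the same object (the $WAP$-virtual diagonal recalled in the Preliminaries, following \cite[Definition 6.4]{Choi:2014}), transitivity of ``if and only if'' yields the claim.

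Concretely, I would argue in two steps. First, if $F(\mathcal{A})$ is Connes amenable, then by \cite[Theorem 6.12]{Choi:2014} there is a $WAP$-virtual diagonal $M\in F_{\mathcal{A}}(\mathcal{A}\hat{\otimes}\mathcal{A})$ for $\mathcal{A}$; feeding $M$ into the implication (ii)$\Rightarrow$(i) of the preceding Theorem produces a $wk^{\ast}$-$wk^{\ast}$ continuous $\mathcal{A}$-bimodule morphism $\rho\colon F(\mathcal{A})\to F_{\mathcal{A}}(\mathcal{A}\hat{\otimes}\mathcal{A})$ with $\Delta_{WAP}\circ\rho=id_{F(\mathcal{A})}$, together with the fact that $\Delta_{WAP}(M)$ is an identity for $F(\mathcal{A})$; hence $F(\mathcal{A})$ is $WAP$-biprojective with an identity. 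Conversely, if $F(\mathcal{A})$ is $WAP$-biprojective with an identity $e$, then the implication (i)$\Rightarrow$(ii) of the preceding Theorem shows that $M=\rho(e)$ is a $WAP$-virtual diagonal for $\mathcal{A}$, so \cite[Theorem 6.12]{Choi:2014} gives that $F(\mathcal{A})$ is Connes amenable.

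I do not expect any genuine obstacle; the one point deserving a moment's care is that the clause ``with an identity'' must not be read as an extra hypothesis on top of $WAP$-biprojectivity and Connes amenability. This is already handled by the preceding Theorem, whose construction in (ii)$\Rightarrow$(i) simultaneously exhibits the splitting $\rho$ and the identity $\Delta_{WAP}(M)$ of $F(\mathcal{A})$. Thus the corollary is a direct consequence of the two cited equivalences, and the only writing task is to present this chaining cleanly.
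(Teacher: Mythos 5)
Your proposal is correct and is exactly the paper's own argument: the authors prove this corollary by the same chaining of \cite[Theorem 6.12]{Choi:2014} with the preceding Theorem. Your additional remarks on how the identity $\Delta_{WAP}(M)$ is produced are accurate but not needed beyond what the two cited results already give.
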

\begin{proof}
Applying \cite[Theorem 6.12]{Choi:2014} and the previous Theorem.	
\end{proof}
 A dual Banach algebra $\mathcal{A}$ is called Connes biprojective if there exists a bounded $\mathcal{A}$-bimodule morphism $\rho:\mathcal{A}\longrightarrow(\sigma{wc}(\mathcal{A}\hat{\otimes}\mathcal{A})^{\ast})^{\ast}$ such that $\pi_{\sigma{wc}}\circ\rho=id_{\mathcal{A}}$ . Shirinkalam and the second author showed that a dual Banach algebra $\mathcal{A}$ is Connes amenable if and only if $\mathcal{A}$ is Connes biprojective and has an identity, for more details see \cite{Shi:2016}.

\begin{Theorem}\label{Th 5.5}
	Let $\mathcal{A}$ be a Banach algebra. Then
	\begin{enumerate}
		\item [(i)] If $\mathcal{A}$ is biprojective, then $F(\mathcal{A})$ is $WAP$-biprojective.
		\item[(ii)] If $\mathcal{A}$ is a dual Banach algebra and $F(\mathcal{A})$ is $WAP$-biprojective, then $\mathcal{A}$ is Connes biprojective.
	\end{enumerate}
\end{Theorem}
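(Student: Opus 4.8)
The plan is to obtain (i) by dualising the splitting $\pi_{\mathcal{A}}\circ\rho_{0}=id_{\mathcal{A}}$ twice, and to obtain (ii) by transporting a $WAP$-biprojectivity datum for $F(\mathcal{A})$ down to $\mathcal{A}$ along $\eta_{\mathcal{A}}:\mathcal{A}\to F(\mathcal{A})$ and a canonical retraction $\theta:F(\mathcal{A})\to\mathcal{A}$ available because $\mathcal{A}$ is dual. The one elementary input used repeatedly is: if $T:X\to Y$ is a bounded $\mathcal{A}$-bimodule morphism of Banach $\mathcal{A}$-bimodules, then $T^{\ast}$ is an $\mathcal{A}$-bimodule morphism mapping $WAP(Y^{\ast})$ into $WAP(X^{\ast})$, since $a\mapsto a\cdot T^{\ast}(g)=T^{\ast}(a\cdot g)$ is a weakly compact operator composed with $T^{\ast}$, hence weakly compact (and similarly on the right). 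In particular $\pi_{\mathcal{A}}^{\ast}$ restricts to a bimodule morphism $\pi^{WAP}:WAP(\mathcal{A}^{\ast})\to WAP((\mathcal{A}\hat{\otimes}\mathcal{A})^{\ast})$, and by definition $\Delta_{WAP}=(\pi^{WAP})^{\ast}$.

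For (i): pick a bounded $\mathcal{A}$-bimodule morphism $\rho_{0}:\mathcal{A}\to\mathcal{A}\hat{\otimes}\mathcal{A}$ with $\pi_{\mathcal{A}}\circ\rho_{0}=id_{\mathcal{A}}$. By the remark above $\rho_{0}^{\ast}$ restricts to an $\mathcal{A}$-bimodule morphism $\rho_{0}^{\ast}|:WAP((\mathcal{A}\hat{\otimes}\mathcal{A})^{\ast})\to WAP(\mathcal{A}^{\ast})$; put $\rho:=(\rho_{0}^{\ast}|)^{\ast}:F(\mathcal{A})\to F_{\mathcal{A}}(\mathcal{A}\hat{\otimes}\mathcal{A})$. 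As an adjoint, $\rho$ is $wk^{\ast}$-$wk^{\ast}$ continuous and an $\mathcal{A}$-bimodule morphism (hence, by the Remark following Lemma \ref{l5.3}, an $F(\mathcal{A})$-bimodule morphism). Since $\rho_{0}^{\ast}|\circ\pi^{WAP}$ sends $f$ to $f\circ\pi_{\mathcal{A}}\circ\rho_{0}=f$, it is the identity on $WAP(\mathcal{A}^{\ast})$; taking adjoints gives $\Delta_{WAP}\circ\rho=id_{F(\mathcal{A})}$, so $F(\mathcal{A})$ is $WAP$-biprojective.

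For (ii): since $\mathcal{A}=(\mathcal{A}_{\ast})^{\ast}$ is a dual Banach algebra, for $f\in\mathcal{A}_{\ast}$ the module maps $a\mapsto a\cdot f$ and $a\mapsto f\cdot a$ are $wk^{\ast}$-$wk$ continuous (this is $\sigma wc(\mathcal{A}_{\ast})=\mathcal{A}_{\ast}$), hence carry the $wk^{\ast}$-compact ball of $\mathcal{A}$ to a weakly compact set, so $\mathcal{A}_{\ast}\subseteq WAP(\mathcal{A}^{\ast})=F(\mathcal{A})_{\ast}$; the identical argument gives $\sigma wc((\mathcal{A}\hat{\otimes}\mathcal{A})^{\ast})\subseteq WAP((\mathcal{A}\hat{\otimes}\mathcal{A})^{\ast})$. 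Write $\iota:\mathcal{A}_{\ast}\hookrightarrow WAP(\mathcal{A}^{\ast})$ and $j:\sigma wc((\mathcal{A}\hat{\otimes}\mathcal{A})^{\ast})\hookrightarrow WAP((\mathcal{A}\hat{\otimes}\mathcal{A})^{\ast})$ for the induced $\mathcal{A}$-bimodule inclusions, and set $\theta:=\iota^{\ast}:F(\mathcal{A})\to\mathcal{A}$. A direct pairing computation gives $\theta\circ\eta_{\mathcal{A}}=id_{\mathcal{A}}$, because $\langle f,\theta(\eta_{\mathcal{A}}(a))\rangle=\langle\iota(f),\eta_{\mathcal{A}}(a)\rangle=\langle f,a\rangle$ for $a\in\mathcal{A}$, $f\in\mathcal{A}_{\ast}$. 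The square with horizontal arrows $\iota,j$ and vertical arrows $\pi_{\mathcal{A}}^{\ast}|_{\mathcal{A}_{\ast}},\pi^{WAP}$ commutes (every arrow is an inclusion or $f\mapsto f\circ\pi_{\mathcal{A}}$), so taking adjoints and using $\pi_{\sigma wc}=(\pi_{\mathcal{A}}^{\ast}|_{\mathcal{A}_{\ast}})^{\ast}$ yields $\pi_{\sigma wc}\circ j^{\ast}=\theta\circ\Delta_{WAP}$. Now let $\rho_{F}:F(\mathcal{A})\to F_{\mathcal{A}}(\mathcal{A}\hat{\otimes}\mathcal{A})$ be the $wk^{\ast}$-$wk^{\ast}$ continuous $\mathcal{A}$-bimodule morphism given by $WAP$-biprojectivity, with $\Delta_{WAP}\circ\rho_{F}=id_{F(\mathcal{A})}$, and define $\rho:=j^{\ast}\circ\rho_{F}\circ\eta_{\mathcal{A}}:\mathcal{A}\to(\sigma wc((\mathcal{A}\hat{\otimes}\mathcal{A})^{\ast}))^{\ast}$. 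It is bounded, and it is an $\mathcal{A}$-bimodule morphism: since $j^{\ast},\rho_{F}$ are $\mathcal{A}$-bimodule morphisms, $\eta_{\mathcal{A}}$ is a homomorphism, and $a\cdot\bar{b}=\overline{ab}$ by Lemma \ref{l5.3}(i), one gets $a\cdot\rho(b)=j^{\ast}\rho_{F}(a\cdot\bar{b})=j^{\ast}\rho_{F}(\overline{ab})=\rho(ab)$, and symmetrically on the right. Finally
\[
\pi_{\sigma wc}\circ\rho=\pi_{\sigma wc}\circ j^{\ast}\circ\rho_{F}\circ\eta_{\mathcal{A}}=\theta\circ\Delta_{WAP}\circ\rho_{F}\circ\eta_{\mathcal{A}}=\theta\circ\eta_{\mathcal{A}}=id_{\mathcal{A}},
\]
so $\mathcal{A}$ is Connes biprojective.

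The computations are routine diagram chases; the two points that need care are the inclusions of the $\sigma wc$-submodules into the corresponding $WAP$-submodules, which rest solely on $wk^{\ast}$-compactness of the unit ball of a dual space, and the precise identification of the retraction $\theta:F(\mathcal{A})\to\mathcal{A}$ together with the verifications $\theta\circ\eta_{\mathcal{A}}=id_{\mathcal{A}}$ and $\pi_{\sigma wc}\circ j^{\ast}=\theta\circ\Delta_{WAP}$. I expect keeping the various adjoints and module actions correctly aligned — rather than any genuine difficulty — to be the main thing to get right.
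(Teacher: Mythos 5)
Your proposal is correct and follows essentially the same route as the paper: part (i) is obtained by restricting the adjoint of the biprojectivity splitting to the $WAP$-subspaces and dualising, and part (ii) uses exactly the paper's retractions (your $\theta=\iota^{\ast}$ and $j^{\ast}$ are the paper's restriction maps $q'$ and $q$) together with the identities $q'\circ\eta_{\mathcal{A}}=id_{\mathcal{A}}$ and $\pi_{\sigma wc}\circ q=q'\circ\Delta_{WAP}$. The only cosmetic difference is that in (i) you verify $\Delta_{WAP}\circ\rho=id_{F(\mathcal{A})}$ by a direct computation on the preduals, whereas the paper invokes the functoriality of $F(-)$ and the uniqueness of $wk^{\ast}$-continuous extensions; both are valid.
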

\begin{proof}
	(i) Suppose that $\mathcal{A}$ is biprojective. Then there exists a bounded $\mathcal{A}$-bimodule morphism $\psi:\mathcal{A}\longrightarrow\mathcal{A}\hat{\otimes}\mathcal{A}$ such that $\psi$ is a right inverse for $\pi_{\mathcal{A}}$. By standard properties of weakly compact maps, it is easy to see that $\psi^{\ast}(WAP(\mathcal{A}\hat{\otimes}\mathcal{A})^{\ast})\subseteq{WAP}(\mathcal{A}^{\ast})$. Let $\rho=(\psi^{\ast}\vert_{WAP(\mathcal{A}\hat{\otimes}\mathcal{A})^{\ast}})^{\ast}$. We obtain a $wk^{\ast}$-$wk^{\ast}$-continuous $\mathcal{A}$-bimodule morphism $\rho:F(\mathcal{A})\longrightarrow{F}_{\mathcal{A}}(\mathcal{A}\hat{\otimes}\mathcal{A})$. According to the functor $F(-)$ from Banach algebras into the dual Banach algebras \cite[Remark 2.10]{Choi:2014}, we have $\rho=F(\psi)$ and $\Delta_{WAP}=F(\pi_\mathcal{A})$. Also both squares in the following diagram commute:
	\begin{center}
		\begin{tikzpicture}[node distance=2cm, auto]
		\node (A) {$\mathcal{A}$};
		\node(B) [right of=A] {$\mathcal{A}\hat{\otimes}\mathcal{A}$};
		\node(C) [right of=B] {$\mathcal{A}$};
		\node (D) [below of=A] {$F(\mathcal{A})$};
		\node(E) [right of=D] {$F_{\mathcal{A}}(\mathcal{A}\hat{\otimes}\mathcal{A})$};
		\node(F) [right of=E] {$F(\mathcal{A}),$};
		\draw[->](A) to node {$\psi$}(B);
		\draw[->](B) to node {$\pi_\mathcal{A}$}(C);
		\draw[->](A) to node [left] {$\eta_{\mathcal{A}}$}(D);
		\draw[->](B) to node [left] {$\eta_{\mathcal{A}\hat{\otimes}\mathcal{A}}$}(E);
		\draw[->](D) to node {$F(\psi)$}(E);
		\draw[->](E) to node {$F(\pi_\mathcal{A})$}(F);
		\draw[->](C) to node [left] {$\eta_{\mathcal{A}}$}(F);
	\end{tikzpicture}
	\end{center}
	therefore the outer rectangle commutes, that is, $F(\pi_\mathcal{A})\circ F(\psi)\circ\eta_{\mathcal{A}}=\eta_{\mathcal{A}}\circ\pi_\mathcal{A}\circ\psi$. On the other hand \cite[Corollary 5.2]{Choi:2014} implies that $F(\pi_\mathcal{A}\circ\psi):F(\mathcal{A})\rightarrow F(\mathcal{A})$ is a unique $wk^*$-$wk^*$ continuous map such that $F(\pi_\mathcal{A}\circ\psi)\circ\eta_{\mathcal{A}}=\eta_{\mathcal{A}}\circ\pi_\mathcal{A}\circ\psi$. So we have $F(\pi_\mathcal{A})\circ F(\psi)=F(\pi_\mathcal{A}\circ\psi)$. Thus
	\begin{equation*}
	{\Delta_{WAP}}\circ\rho=F(\pi_\mathcal{A})\circ F(\psi)=F(\pi_\mathcal{A}\circ\psi)=F(id_{\mathcal{A}})=id_{F(\mathcal{A})}.
	\end{equation*}
	So $F(\mathcal{A})$ is $WAP$-biprojective.\\
	(ii) Suppose that $\mathcal{A}$ is a dual Banach algebra and $F(\mathcal{A})$ is $WAP$-biprojective. Then there exists a bounded $\mathcal{A}$-bimodule morphism $\rho:F(\mathcal{A})\longrightarrow{F}_{\mathcal{A}}(\mathcal{A}\hat{\otimes}\mathcal{A})$ such that 
	\begin{equation}\label{e2.1}
	\Delta_{WAP}\circ\rho=id_{F(\mathcal{A})}.
	\end{equation}
Since $(\sigma{wc}(\mathcal{A}\hat{\otimes}\mathcal{A})^{\ast})^{\ast}$ is normal, \cite[Proposition 4.2]{Runde:2004} implies that $\sigma{wc}(\mathcal{A}\hat{\otimes}\mathcal{A})^{\ast}\subseteq{WAP}(\mathcal{A}\hat{\otimes}\mathcal{A})^{\ast}$. So there is a natural quotient map $q: {F}_{\mathcal{A}}(\mathcal{A}\hat{\otimes}\mathcal{A})\longrightarrow(\sigma{wc}(\mathcal{A}\hat{\otimes}\mathcal{A})^{\ast})^{\ast}$ which is defined by $q(u)=u\vert_{\sigma{wc}(\mathcal{A}\hat{\otimes}\mathcal{A})^{\ast}}$ for every $u\in{F_{\mathcal{A}}(\mathcal{A}\hat{\otimes}\mathcal{A})}$.  On the other hand, since $\mathcal{A}$ is a dual Banach algebra, $\mathcal{A}_{\ast}\subseteq{WAP}(\mathcal{A}^{\ast})$ \cite[Proposition 4.2]{Runde:2004}, there exists a quotient map $q^{\prime}:{F}(\mathcal{A})\longrightarrow{\mathcal{A}}$ which is defined by $q^{\prime}(\phi)=\phi\vert_{\mathcal{A}_{\ast}}$ for every $\phi\in{F(\mathcal{A})}$. So
\begin{equation}\label{e2.2}
q^{\prime}\circ\eta_{\mathcal{A}}=id_{\mathcal{A}}.
\end{equation}
  Also since $\Delta_{WAP}=(\pi^{\ast}\vert_{WAP(\mathcal{A}^{\ast})})^{\ast}$ and $\pi_{\sigma{wc}}=(\pi^{\ast}\vert_{\mathcal{A}_{\ast}})^{\ast}$, for every $u\in{F_{\mathcal{A}}(\mathcal{A}\hat{\otimes}\mathcal{A})}$ and $f\in{\mathcal{A}_{\ast}}$ we have
	\begin{equation*}
	\begin{split}
	\langle{f},q^{\prime}\circ\Delta_{WAP}(u)\rangle&=\langle f, (\Delta_{WAP}(u))\vert_{\mathcal{A}_{\ast}}\rangle=\langle{f},\Delta_{WAP}(u)\rangle\\&=\langle\pi^{\ast}\vert_{WAP(\mathcal{A}^{\ast})}(f),u\rangle=\langle\pi_{\mathcal{A}}^{\ast}(f),u\rangle,
\end{split}
	\end{equation*}
	and 
		\begin{equation*}
	\begin{split}
	\langle{f},\pi_{\sigma{wc}}\circ{q}(u)\rangle&=\langle\pi^{\ast}\vert_{\mathcal{A}_{\ast}}(f),q(u)\rangle=\langle\pi^{\ast}\vert_{\mathcal{A}_{\ast}}(f),u\vert_{\sigma{wc}({\mathcal{A}}\hat{\otimes}{\mathcal{A}})^{\ast}}\rangle\\&=\langle\pi^{\ast}\vert_{\mathcal{A}_{\ast}}(f),u\rangle=\langle\pi_{\mathcal{A}}^{\ast}(f),u\rangle.
	\end{split}
	\end{equation*}
	So for every $u\in{F_{\mathcal{A}}(\mathcal{A}\hat{\otimes}\mathcal{A})}$ we have $q^{\prime}\circ\Delta_{WAP}(u)=\pi_{\sigma{wc}}\circ{q}(u)$ as an element in $\mathcal{A}$. Then 
	\begin{equation}\label{e2.3}
	q^{\prime}\circ\Delta_{WAP}=\pi_{\sigma{wc}}\circ{q}.
	\end{equation}
Let $\tau=q\circ\rho\circ\eta_{\mathcal{A}}$. We obtain a bounded $\mathcal{A}$-bimodule morphism $\tau:\mathcal{A}\longrightarrow(\sigma{wc}(\mathcal{A}\hat{\otimes}\mathcal{A})^{\ast})^{\ast}$. Therefore 
(\ref{e2.1}), (\ref{e2.2}) and (\ref{e2.3}) imply that
	\begin{equation*}
	\pi_{\sigma{wc}}\circ\tau=\pi_{\sigma{wc}}\circ{q}\circ\rho\circ\eta_{\mathcal{A}}=q^{\prime}\circ\Delta_{WAP}\circ\rho\circ\eta_{\mathcal{A}}=q^{\prime}\circ{id}_{F(\mathcal{A})}\circ\eta_{\mathcal{A}}=id_{\mathcal{A}}.
	\end{equation*}
	Hence the proof is complete.
\end{proof}
\begin{cor}\label{Cor2.7}
	If $\mathcal{A}$ is a dual Banach algebra and $F(\mathcal{A})$ is Connes amenable, then $\mathcal{A}$ is Connes amenable.	
\end{cor}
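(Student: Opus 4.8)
The plan is to obtain Corollary~\ref{Cor2.7} by chaining together Corollary~\ref{c2.5}, Theorem~\ref{Th 5.5}(ii) and the Shirinkalam--Pourabbas characterization recalled just before Theorem~\ref{Th 5.5}. So, assuming $\mathcal{A}$ is a dual Banach algebra with $F(\mathcal{A})$ Connes amenable, I would first apply Corollary~\ref{c2.5} to conclude that $F(\mathcal{A})$ is $WAP$-biprojective and has an identity, say $e$. Since $\mathcal{A}$ is dual, Theorem~\ref{Th 5.5}(ii) then applies verbatim and yields that $\mathcal{A}$ is Connes biprojective. In view of \cite{Shi:2016} — a dual Banach algebra is Connes amenable exactly when it is Connes biprojective and unital — the whole statement reduces to producing an identity for $\mathcal{A}$.

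For that last point I would reuse the map $q^{\prime}:F(\mathcal{A})\longrightarrow\mathcal{A}$, $q^{\prime}(\phi)=\phi|_{\mathcal{A}_\ast}$, built in the proof of Theorem~\ref{Th 5.5}(ii); recall it is $wk^\ast$-$wk^\ast$ continuous and satisfies $q^{\prime}\circ\eta_{\mathcal{A}}=id_{\mathcal{A}}$, so in particular $q^{\prime}$ is onto. The key observation is that $q^{\prime}$ is moreover an algebra homomorphism: being the adjoint of the inclusion of closed submodules $\mathcal{A}_\ast\hookrightarrow WAP(\mathcal{A}^\ast)=F(\mathcal{A})_\ast$, it satisfies $q^{\prime}(\bar a\,\square\,\bar b)=q^{\prime}(\overline{ab})=ab=q^{\prime}(\bar a)q^{\prime}(\bar b)$ on the $wk^\ast$-dense range $\{\bar a=\eta_{\mathcal{A}}(a):a\in\mathcal{A}\}$ of $\eta_{\mathcal{A}}$, and this extends to all of $F(\mathcal{A})$ by the separate $wk^\ast$-continuity of $\square$ together with normality of $F(\mathcal{A})$; alternatively one may simply invoke the universal property of the functor $F(-)$ in \cite[Remark 2.10]{Choi:2014} applied to $id_{\mathcal{A}}:\mathcal{A}\to\mathcal{A}$, which identifies $q^{\prime}$ as the induced $wk^\ast$-continuous homomorphism. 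Granting this, $q^{\prime}(e)$ is an identity for $\mathcal{A}$: for $a\in\mathcal{A}$, write $a=q^{\prime}(\phi)$ with $\phi\in F(\mathcal{A})$ and compute $q^{\prime}(e)\,a=q^{\prime}(e)q^{\prime}(\phi)=q^{\prime}(e\,\square\,\phi)=q^{\prime}(\phi)=a$, and symmetrically on the right.

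Combining the steps, $\mathcal{A}$ is a Connes biprojective dual Banach algebra possessing the identity $q^{\prime}(e)$, so by \cite{Shi:2016} it is Connes amenable, which completes the argument. I expect the only step requiring genuine (if short) verification to be the multiplicativity of $q^{\prime}$ — that is, the descent of the identity of the enveloping algebra to an identity of $\mathcal{A}$ — for which the $wk^\ast$-density of the range of $\eta_{\mathcal{A}}$ and normality must be used with a little care; everything else is a direct concatenation of the already-established implications.
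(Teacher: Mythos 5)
Your argument is correct and follows the paper's proof almost verbatim in its overall structure: Corollary~\ref{c2.5} to get $WAP$-biprojectivity of $F(\mathcal{A})$ together with an identity, Theorem~\ref{Th 5.5}(ii) to get Connes biprojectivity of $\mathcal{A}$, and then the characterization of \cite{Shi:2016} (Connes amenable $\Leftrightarrow$ Connes biprojective and unital). The one place you diverge is in producing the identity of $\mathcal{A}$: the paper simply invokes \cite[Lemma 2.7]{Daws:2007}, which states that for a dual Banach algebra $\mathcal{A}$ the enveloping algebra $F(\mathcal{A})$ is unital if and only if $\mathcal{A}$ is, whereas you prove the direction you need by hand, showing that $q'\colon F(\mathcal{A})\to\mathcal{A}$, $q'(\phi)=\phi|_{\mathcal{A}_*}$, is a surjective $wk^*$-$wk^*$ continuous algebra homomorphism and pushing the identity $e$ of $F(\mathcal{A})$ down to $q'(e)$. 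That substep is sound: multiplicativity of $q'$ on the $wk^*$-dense range of $\eta_{\mathcal{A}}$ follows from $q'\circ\eta_{\mathcal{A}}=id_{\mathcal{A}}$ and the fact that $\eta_{\mathcal{A}}$ is a homomorphism, and it extends by separate $wk^*$-continuity of the products in both $F(\mathcal{A})$ and $\mathcal{A}$ (the latter because $\mathcal{A}$ is dual). What your version buys is self-containment — you reuse the map $q'$ already built in the proof of Theorem~\ref{Th 5.5}(ii) instead of importing an external lemma — at the cost of a short density argument that the paper avoids by citation. Both are valid; the content is the same.
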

\begin{proof}
	If $F(\mathcal{A})$ is Connes amenable, then by Corollary \ref{c2.5}, $F(\mathcal{A})$ is $WAP$-biprojective and has a unit. Applying Theorem \ref{Th 5.5} (ii) and \cite[Lemma 2.7]{Daws:2007}, $\mathcal{A}$ is Connes biprojective and has a unit. So $\mathcal{A}$ is Connes amenable \cite[Theorem 2.2]{Shi:2016}.
\end{proof}

\begin{Remark}
	Daws \cite[Lemma 2.7]{Daws:2007} showed that $F(\mathcal{A})$ is unital if and only if $\mathcal{A}$ is unital, where $\mathcal{A}$ is a dual Banach algebra. We show that if $\mathcal{A}$ is a Banach algebra with a bounded approximate identity, then $F(\mathcal{A})$ has a unit (without duality condition on $\mathcal{A}$). This statement helps us to figure out $WAP$-biprojectivity of the enveloping dual Banach algebras associated to certain Banach algebras. 
\end{Remark}
\begin{lemma}\label{l5.9}
	If $\mathcal{A}$ is a Banach algebra with a bounded approximate identity, then $F(\mathcal{A})$ has a unit.
\end{lemma}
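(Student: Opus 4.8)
The plan is to produce the identity of $F(\mathcal{A})$ directly, as a $wk^{\ast}$-accumulation point of the image in $F(\mathcal{A})$ of a bounded approximate identity of $\mathcal{A}$. The only facts about $F(\mathcal{A})$ that enter are that it is a dual Banach algebra and that $\eta_{\mathcal{A}}\colon\mathcal{A}\to F(\mathcal{A})$ is a bounded homomorphism with $wk^{\ast}$-dense range.

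First I would fix a bounded approximate identity $(e_{\alpha})$ for $\mathcal{A}$, bounded by some $C>0$. Then $(\bar{e}_{\alpha})$ is bounded in $F(\mathcal{A})$, so by Banach--Alaoglu it has a $wk^{\ast}$-convergent subnet; passing to that subnet, set $e=wk^{\ast}\text{-}\lim_{\alpha}\bar{e}_{\alpha}\in F(\mathcal{A})$. The goal is to show that this $e$ is a two-sided identity.

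Here one uses that $F(\mathcal{A})$ is a dual Banach algebra, so that for each fixed $\zeta\in F(\mathcal{A})$ both maps $\xi\mapsto\zeta\square\xi$ and $\xi\mapsto\xi\square\zeta$ are $wk^{\ast}$-$wk^{\ast}$ continuous. Taking $\zeta=\bar{a}$ and using that $\eta_{\mathcal{A}}$ is a homomorphism, one computes
\[
\bar{a}\square e=wk^{\ast}\text{-}\lim_{\alpha}\bigl(\bar{a}\square\bar{e}_{\alpha}\bigr)=wk^{\ast}\text{-}\lim_{\alpha}\overline{ae_{\alpha}}=\bar{a}\qquad(a\in\mathcal{A}),
\]
the last step because $ae_{\alpha}\to a$ in norm and $\eta_{\mathcal{A}}$ is continuous; the symmetric computation with $e_{\alpha}a\to a$ gives $e\square\bar{a}=\bar{a}$. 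Thus the set $S=\{\xi\in F(\mathcal{A}):e\square\xi=\xi\text{ and }\xi\square e=\xi\}$ contains $\eta_{\mathcal{A}}(\mathcal{A})$; since $S$ is $wk^{\ast}$-closed (again by separate $wk^{\ast}$-continuity of the product) and $\eta_{\mathcal{A}}(\mathcal{A})$ is $wk^{\ast}$-dense, $S=F(\mathcal{A})$, i.e.\ $e$ is a unit.

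The argument is essentially routine; the one point to be careful about is that both one-sided multiplications on $F(\mathcal{A})$ are separately $wk^{\ast}$-continuous (this is exactly the defining property of a dual Banach algebra), together with the elementary identity $\bar{a}\square\bar{e}_{\alpha}=\overline{ae_{\alpha}}$. An alternative, slightly heavier route would be to realise $F(\mathcal{A})=WAP(\mathcal{A}^{\ast})^{\ast}$ with the first Arens product and transport a mixed identity of $\mathcal{A}^{\ast\ast}$ along the canonical quotient $\mathcal{A}^{\ast\ast}\to F(\mathcal{A})$; the route above avoids any mention of $\mathcal{A}^{\ast\ast}$ or of $WAP(\mathcal{A}^{\ast})$.
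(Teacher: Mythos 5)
Your proof is correct and follows essentially the same route as the paper: take a $wk^{\ast}$-cluster point $e$ of $(\bar{e}_{\alpha})$ via Banach--Alaoglu, verify that $e$ acts as an identity on the $wk^{\ast}$-dense subalgebra $\eta_{\mathcal{A}}(\mathcal{A})$, and then extend to all of $F(\mathcal{A})$ using separate $wk^{\ast}$-continuity of the product in a dual Banach algebra. The only cosmetic difference is that the paper verifies $a\cdot\Phi_{0}=\bar{a}$ by pairing against elements of $WAP(\mathcal{A}^{\ast})$, whereas you use the homomorphism identity $\bar{a}\square\bar{e}_{\alpha}=\overline{ae_{\alpha}}$ together with norm convergence $ae_{\alpha}\to a$; both computations rest on the same facts.
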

\begin{proof}
	Let $(e_{\alpha})$ be a bounded approximate identity in $\mathcal{A}$. Regard $(\bar{e}_{\alpha})=(\eta_{\mathcal{A}}({e}_{\alpha}))$ as a bounded net in $F(\mathcal{A})$, where $\eta_{\mathcal{A}}:\mathcal{A}\longrightarrow{F}(\mathcal{A})$. By Banach-Alaoglu Theorem $(\bar{e}_{\alpha})$ has a $wk^{\ast}$-limit point in $F(\mathcal{A})$. Define $\Phi_{0}=wk^{\ast}\hbox{-}\lim\limits_{\alpha}\bar{e}_{\alpha}$. We claim that $\Phi_{0}$ is a unit for $F(\mathcal{A})$. For every $a\in{\mathcal{A}}$ and $\lambda\in{WAP(\mathcal{A}^{\ast})}$, we have
	\begin{equation*}
	\begin{split}
	\langle\lambda,a\cdot\Phi_{0}\rangle&=\langle\lambda\cdot{a},\Phi_{0}\rangle=\lim\limits_{\alpha}\langle\lambda\cdot{a},\bar{e}_{\alpha}\rangle=\lim\limits_{\alpha}\langle{e}_{\alpha},\lambda\cdot{a}\rangle\\&=\lim\limits_{\alpha}\langle{a}{e}_{\alpha},\lambda\rangle=\langle{a},\lambda\rangle=\langle\lambda,\bar{a}\rangle.
	\end{split}
	\end{equation*}
	So $a\cdot\Phi_{0}=\bar{a}$. By similarity for the right action, $\Phi_{0}\cdot{a}=\bar{a}$. Since $\eta_{\mathcal{A}}$ has a $wk^{\ast}$-dense range, for every $\Psi\in{F(\mathcal{A})}$ there exists a bounded net $(a_{\alpha})$ in $\mathcal{A}$ such that $\Psi=wk^{\ast}\hbox{-}\lim\limits_{\alpha}\bar{a}_{\alpha}$ in ${F(\mathcal{A})}$. Since ${F(\mathcal{A})}$ is a dual Banach algebra \cite[Theorem 4.10]{Runde:2004}, the multiplication in ${F(\mathcal{A})}$ is separately $wk^{\ast}$-continuous \cite[Exercise 4.4.1]{Runde:2002}. Lemma \ref{l5.3} (i) implies that
	\begin{equation*}
	\Psi\square\Phi_{0}=wk^{\ast}\hbox{-}\lim\limits_{\alpha}(\bar{a}_{\alpha}\square\Phi_{0})=wk^{\ast}\hbox{-}\lim\limits_{\alpha}({a}_{\alpha}\cdot\Phi_{0})=wk^{\ast}\hbox{-}\lim\limits_{\alpha}\bar{a}_{\alpha}=\Psi,
	\end{equation*}    
	 similarly $\Phi_{0}\square\Psi=\Psi$.
\end{proof}
\begin{cor}
Let $\mathcal{A}$ be a Banach algebra with a bounded approximate identity. Then $F(\mathcal{A})$ is $WAP$-biprojective if and only if $F(\mathcal{A})$ is Connes biprojective.
\end{cor}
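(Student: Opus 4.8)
The plan is to obtain this equivalence simply by concatenating results already established, with Lemma \ref{l5.9} playing the role of discharging the ``with an identity'' caveat. First I would observe that, since $\mathcal{A}$ has a bounded approximate identity, Lemma \ref{l5.9} supplies a (two-sided) unit for the dual Banach algebra $F(\mathcal{A})$. Hence, for $F(\mathcal{A})$, the property ``$WAP$-biprojective'' coincides with ``$WAP$-biprojective with an identity'', and likewise ``Connes biprojective'' coincides with ``Connes biprojective and unital''.

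For the forward direction I would argue as follows: if $F(\mathcal{A})$ is $WAP$-biprojective, then it is $WAP$-biprojective with an identity, so Corollary \ref{c2.5} gives that $F(\mathcal{A})$ is Connes amenable; applying \cite[Theorem 2.2]{Shi:2016} to the dual Banach algebra $F(\mathcal{A})$ then shows that $F(\mathcal{A})$ is Connes biprojective. For the converse, if $F(\mathcal{A})$ is Connes biprojective, then, because it is already unital by Lemma \ref{l5.9}, \cite[Theorem 2.2]{Shi:2016} yields that $F(\mathcal{A})$ is Connes amenable, and Corollary \ref{c2.5} then returns that $F(\mathcal{A})$ is $WAP$-biprojective (with an identity). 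This closes the loop of implications.

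I do not expect a genuine obstacle here. The two points that require care are: (a) checking that Lemma \ref{l5.9} really furnishes a two-sided identity for $F(\mathcal{A})$, so that the unitality hypotheses of Corollary \ref{c2.5} and of \cite[Theorem 2.2]{Shi:2016} are genuinely met; and (b) making sure that \cite[Theorem 2.2]{Shi:2016} is being invoked for $F(\mathcal{A})$ as a dual Banach algebra in its own right (which is legitimate, since $F(\mathcal{A})=WAP(\mathcal{A}^{\ast})^{\ast}$ is dual), and not for $\mathcal{A}$.
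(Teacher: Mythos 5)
Your proposal is correct and follows essentially the same route as the paper: Lemma \ref{l5.9} supplies the unit for $F(\mathcal{A})$, after which Corollary \ref{c2.5} and \cite[Theorem 2.2]{Shi:2016} are chained through Connes amenability in both directions. The two points of care you flag (two-sidedness of the unit, and applying \cite[Theorem 2.2]{Shi:2016} to $F(\mathcal{A})$ itself) are exactly what makes the argument go through.
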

\begin{proof}
Since $\mathcal{A}$ has a bounded approximate identity, by Lemma \ref{l5.9}, $F(\mathcal{A})$ has a unit. Applying Corollary \ref{c2.5} and \cite[Theorem 2.2]{Shi:2016}, $F(\mathcal{A})$ is $WAP$-biprojective if and only if $F(\mathcal{A})$ is Connes amenable if and only if $F(\mathcal{A})$ is Connes biprojective. 
	\end{proof}
\begin{cor}\label{C2.11}
	If $\mathcal{A}$ is a reflexive Banach algebra and $F(\mathcal{A})$ is $WAP$-biprojective, then $F(\mathcal{A})$ is Connes biprojective.
\end{cor}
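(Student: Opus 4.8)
The plan is to show that reflexivity collapses the enveloping construction, so that $F(\mathcal{A})$ is nothing but $\mathcal{A}$ itself regarded as a dual Banach algebra, and then to invoke Theorem \ref{Th 5.5}(ii).

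First I would record the elementary fact that if $\mathcal{A}$ is reflexive then so is $\mathcal{A}^{\ast}$, and hence every bounded linear operator $\mathcal{A}\to\mathcal{A}^{\ast}$ is weakly compact, since the closed unit ball of the reflexive space $\mathcal{A}^{\ast}$ is weakly compact. In particular, for each $f\in\mathcal{A}^{\ast}$ the module maps $a\mapsto a\cdot f$ and $a\mapsto f\cdot a$ from $\mathcal{A}$ to $\mathcal{A}^{\ast}$ are weakly compact, so $WAP(\mathcal{A}^{\ast})=\mathcal{A}^{\ast}$. Consequently $F(\mathcal{A})_{\ast}=WAP(\mathcal{A}^{\ast})=\mathcal{A}^{\ast}$ and $F(\mathcal{A})=WAP(\mathcal{A}^{\ast})^{\ast}=\mathcal{A}^{\ast\ast}=\mathcal{A}$; because $\mathcal{A}=\mathcal{A}^{\ast\ast}$, the first Arens product on $F(\mathcal{A})$ restricts to the original multiplication of $\mathcal{A}$, and $\eta_{\mathcal{A}}\colon\mathcal{A}\to F(\mathcal{A})$ becomes the identity map. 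Thus $F(\mathcal{A})$ and $\mathcal{A}$ coincide as Banach algebras with coinciding preduals.

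Next I would check that a reflexive Banach algebra is automatically a dual Banach algebra, with predual $\mathcal{A}_{\ast}=\mathcal{A}^{\ast}$: the left and right multiplication operators on $\mathcal{A}$ are bounded, hence weakly continuous, and since the weak and $wk^{\ast}$ topologies coincide on the reflexive space $\mathcal{A}$, the multiplication is separately $wk^{\ast}$-continuous, which is exactly the requirement (and $\mathcal{A}^{\ast}$ is trivially a closed submodule of $\mathcal{A}^{\ast}$). With $\mathcal{A}$ now known to be a dual Banach algebra and $F(\mathcal{A})$ $WAP$-biprojective by hypothesis, Theorem \ref{Th 5.5}(ii) yields that $\mathcal{A}$ is Connes biprojective; since $F(\mathcal{A})=\mathcal{A}$ as dual Banach algebras (same underlying algebra, same predual, hence the same space $\sigma wc(\mathcal{A}\hat{\otimes}\mathcal{A})^{\ast}$ and the same map $\pi_{\sigma wc}$), $F(\mathcal{A})$ is Connes biprojective, completing the proof.

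There is no deep obstacle: the only points demanding care are the identification $F(\mathcal{A})\cong\mathcal{A}$ (in particular checking that $WAP(\mathcal{A}^{\ast})=\mathcal{A}^{\ast}$ and that the Arens product degenerates) and the routine verification that reflexivity forces $\mathcal{A}$ to be a dual Banach algebra; everything else is an immediate appeal to Theorem \ref{Th 5.5}(ii).
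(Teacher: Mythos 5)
Your proposal is correct and follows essentially the same route as the paper: establish $WAP(\mathcal{A}^{\ast})=\mathcal{A}^{\ast}$ from reflexivity, conclude $F(\mathcal{A})=\mathcal{A}^{\ast\ast}=\mathcal{A}$, and apply Theorem \ref{Th 5.5}(ii). You are somewhat more careful than the paper in explicitly verifying that $\mathcal{A}$ is a dual Banach algebra with predual $\mathcal{A}^{\ast}$ (a hypothesis Theorem \ref{Th 5.5}(ii) genuinely needs, which the paper leaves implicit), but this is a refinement of the same argument rather than a different one.
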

\begin{proof}
In a reflexive Banach space, by Banach-Alaoglu theorem every bounded sequence has a weakly convergence subsequence. One can see that $WAP(\mathcal{A}^*)=\mathcal{A}^*$. Hence $F(\mathcal{A})=\mathcal{A}^{**}=\mathcal{A}$. Applying Theorem \ref{Th 5.5} (ii), $F(\mathcal{A})=\mathcal{A}$ is Connes biprojective.
\end{proof}
\begin{Proposition}
	For a locally compact group $G$, if $F(M(G))$ is $WAP$-biprojective, then $G$ is amenable.
\end{Proposition}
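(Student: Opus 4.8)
The plan is to reduce the assertion to the classical characterization of Connes amenability of the measure algebra. The first thing I would record is that $M(G)$ is a dual Banach algebra, with predual $C_{0}(G)$, and that it is always unital, since the point mass $\delta_{e}$ at the identity of $G$ is an identity for $M(G)$; in particular $M(G)$ has a bounded approximate identity.

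Granting that $F(M(G))$ is $WAP$-biprojective, the next step is to apply Theorem \ref{Th 5.5}(ii): as $M(G)$ is a dual Banach algebra, this yields that $M(G)$ is Connes biprojective. Then I would combine this with the unitality of $M(G)$ and the result of Shirinkalam and the second author \cite[Theorem 2.2]{Shi:2016}, according to which a dual Banach algebra that is both Connes biprojective and unital is Connes amenable; hence $M(G)$ is Connes amenable. Equivalently, one may stay entirely inside the enveloping-algebra picture: since $M(G)$ has a bounded approximate identity, Lemma \ref{l5.9} makes $F(M(G))$ unital, so by Corollary \ref{c2.5} $F(M(G))$ is Connes amenable, whence Corollary \ref{Cor2.7} gives Connes amenability of $M(G)$.

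To finish, I would invoke Runde's theorem that the measure algebra $M(G)$ is Connes amenable if and only if $G$ is amenable \cite{Runde:2001}; this immediately gives that $G$ is amenable.

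I do not expect a genuine obstacle here: the argument is an assembly of results already in place, the decisive internal input being Theorem \ref{Th 5.5}(ii) together with the fact that $M(G)$ is unital, which upgrades Connes biprojectivity to Connes amenability. The only ingredient not contained in the present development is Runde's characterization of Connes amenability of $M(G)$, which must be cited as a known fact; the one point to be careful about is that the cited form of that theorem applies to an arbitrary locally compact $G$, not merely to the discrete case.
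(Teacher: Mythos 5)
Your argument is correct and is essentially the paper's own proof: the paper uses exactly your second route ($M(G)$ unital $\Rightarrow$ $F(M(G))$ unital by Lemma \ref{l5.9} $\Rightarrow$ Connes amenable by Corollary \ref{c2.5} $\Rightarrow$ $M(G)$ Connes amenable by Corollary \ref{Cor2.7}), and your first route merely unwinds Corollary \ref{Cor2.7} into its ingredients. The only quibble is bibliographic: the characterization that $M(G)$ is Connes amenable if and only if $G$ is amenable should be cited from Runde's 2003 paper on measure algebras rather than \cite{Runde:2001}, since the 2001 paper only establishes one direction.
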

\begin{proof}
 Suppose that $F(M(G))$ is $WAP$-biprojective. Since $M(G)$ has a unit, by Lemma \ref{l5.9} and Corollary \ref{c2.5}, $F(M(G))$ is Connes amenable. Then by Corollary \ref{Cor2.7}, $M(G)$ is Connes amenable. So $G$ is amenable \cite[Theorem 5.4]{Runde:2003}.	
\end{proof}

  Zhang showed that the Banach algebra $\ell^{2}(X)$ with the pointwise multiplication is not biprojective, where $X$ is an infinite set \cite[\textsection2]{zhan:99}. We extend this example to the $WAP$-biprojective case:
\begin{Proposition}\label{p14}
  Let $X$ be an infinite set. Then $F(\ell^2(X))$ is not $WAP$-biprojective.
\end{Proposition}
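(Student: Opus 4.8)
The plan is to use reflexivity of $\ell^{2}(X)$ to identify $F(\ell^{2}(X))$ with $\ell^{2}(X)$ itself, reduce $WAP$-biprojectivity to Connes biprojectivity via Corollary \ref{C2.11}, and then re-run Zhang's argument — built around the orthogonal idempotents $\delta_{x}$ — inside the relevant bidual. Since every bounded linear map out of a reflexive space is weakly compact, $WAP(E^{\ast})=E^{\ast}$ for every Banach $\ell^{2}(X)$-bimodule $E$; in particular $F(\ell^{2}(X))=\ell^{2}(X)$, the $wk^{\ast}$-topology on $\ell^{2}(X)$ is its weak topology, and $F_{\ell^{2}(X)}(\ell^{2}(X)\hat{\otimes}\ell^{2}(X))=(\ell^{2}(X)\hat{\otimes}\ell^{2}(X))^{\ast\ast}$. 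For the same reason every bounded linear map $\ell^{2}(X)\to\ell^{2}(X)\hat{\otimes}\ell^{2}(X)$ is $wk^{\ast}$-$wk$ continuous, so $\sigma wc(\ell^{2}(X)\hat{\otimes}\ell^{2}(X))=\ell^{2}(X)\hat{\otimes}\ell^{2}(X)$, whence $(\sigma wc(\ell^{2}(X)\hat{\otimes}\ell^{2}(X))^{\ast})^{\ast}=(\ell^{2}(X)\hat{\otimes}\ell^{2}(X))^{\ast\ast}$ and $\pi_{\sigma wc}=\pi_{\ell^{2}(X)}^{\ast\ast}$. So, assuming toward a contradiction that $F(\ell^{2}(X))$ is $WAP$-biprojective, Corollary \ref{C2.11} gives a bounded $\ell^{2}(X)$-bimodule morphism $\rho\colon\ell^{2}(X)\to(\ell^{2}(X)\hat{\otimes}\ell^{2}(X))^{\ast\ast}$ with $\pi_{\ell^{2}(X)}^{\ast\ast}\circ\rho=\mathrm{id}_{\ell^{2}(X)}$.

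Next I would pin down $\rho$ on the point masses. For $x\in X$ the element $\delta_{x}\in\ell^{2}(X)$ is an idempotent (pointwise product), and on elementary tensors $\delta_{x}\cdot(f\otimes g)\cdot\delta_{x}=f(x)g(x)\,\delta_{x}\otimes\delta_{x}$, so $T\mapsto\delta_{x}\cdot T\cdot\delta_{x}$ is a bounded rank-one operator on $\ell^{2}(X)\hat{\otimes}\ell^{2}(X)$ with range $\mathbb{C}(\delta_{x}\otimes\delta_{x})$; its bitranspose, which is precisely the map $m\mapsto\delta_{x}\cdot m\cdot\delta_{x}$ on $(\ell^{2}(X)\hat{\otimes}\ell^{2}(X))^{\ast\ast}$, still has one-dimensional range $\mathbb{C}(\delta_{x}\otimes\delta_{x})$. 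Since $\rho$ is a bimodule morphism, $\delta_{x}\cdot\rho(\delta_{x})\cdot\delta_{x}=\rho(\delta_{x}\delta_{x})=\rho(\delta_{x})$, hence $\rho(\delta_{x})=c_{x}\,\delta_{x}\otimes\delta_{x}$ for a scalar $c_{x}$; applying $\pi_{\ell^{2}(X)}^{\ast\ast}$, which restricts to $\pi_{\ell^{2}(X)}$ on $\ell^{2}(X)\hat{\otimes}\ell^{2}(X)$, gives $c_{x}\delta_{x}=\delta_{x}$, so $\rho(\delta_{x})=\delta_{x}\otimes\delta_{x}$.

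Finally I would apply $\rho$ to finite sums of point masses. For a finite $F\subseteq X$ with $|F|=n$, set $\mathbf{1}_{F}=\sum_{x\in F}\delta_{x}$, so $\|\mathbf{1}_{F}\|_{\ell^{2}(X)}=n^{1/2}$ and $\rho(\mathbf{1}_{F})=\sum_{x\in F}\delta_{x}\otimes\delta_{x}$. The projective norm of this last element equals $n$: it is $\le n$ by the triangle inequality, and $\ge n$ by pairing with the identity operator in $\mathcal{B}(\ell^{2}(X))=(\ell^{2}(X)\hat{\otimes}\ell^{2}(X))^{\ast}$ (equivalently, under $\ell^{2}(X)\hat{\otimes}\ell^{2}(X)\cong\mathcal{S}^{1}(\ell^{2}(X))$ it is the rank-$n$ orthogonal projection onto $\ell^{2}(F)$, of trace norm $n$). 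As $\ell^{2}(X)\hat{\otimes}\ell^{2}(X)$ sits isometrically in its bidual, boundedness of $\rho$ forces $n\le\|\rho\|\,n^{1/2}$, that is $n^{1/2}\le\|\rho\|$ for every finite $F\subseteq X$; since $X$ is infinite this is absurd, and therefore $F(\ell^{2}(X))$ is not $WAP$-biprojective.

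The step I expect to cost the most care is the one in the second paragraph: one must verify that passing from $\ell^{2}(X)\hat{\otimes}\ell^{2}(X)$ to its bidual does not enlarge the two-sided $\delta_{x}$-fixed subspace beyond $\mathbb{C}(\delta_{x}\otimes\delta_{x})$, which is exactly why I would realise the compression $T\mapsto\delta_{x}\cdot T\cdot\delta_{x}$ as a single bounded rank-one operator — so that its bitranspose automatically has the same one-dimensional range — instead of intersecting the two (reflexive) one-sided corners. The other point to get right is the norm identity $\big\|\sum_{x\in F}\delta_{x}\otimes\delta_{x}\big\|_{\ell^{2}(X)\hat{\otimes}\ell^{2}(X)}=|F|$, since it is the gap between this and $\|\mathbf{1}_{F}\|_{2}=|F|^{1/2}$ that drives the whole contradiction.
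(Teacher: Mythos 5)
Your proof is correct and follows essentially the same route as the paper: use reflexivity of $\ell^{2}(X)$ to collapse $F(\ell^{2}(X))$ to $\ell^{2}(X)$ and all the $WAP/\sigma wc$ subspaces to the full duals, show $\rho(\delta_{x})=\delta_{x}\otimes\delta_{x}$, and obtain the contradiction by pairing with the identity operator in $(\ell^{2}(X)\hat{\otimes}\ell^{2}(X))^{\ast}$ --- the paper phrases the endgame as the false inclusion $\ell^{2}(X)\subset\ell^{1}(X)$ rather than your norm blow-up $n\le\|\rho\|n^{1/2}$ on finite indicator sums, but this is the same estimate. Your local implementations (the rank-one compression identifying $\rho(\delta_{x})$, versus the paper's $wk^{\ast}$-approximation of $\rho(e_i)$ by a net from $\mathcal{A}\hat{\otimes}\mathcal{A}$) are, if anything, tighter.
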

\begin{proof}
Since $\mathcal{A}=\ell^2(X)$ is a Hilbert space, by a similar argument as in the Corollary \ref{C2.11}, we have $F(\mathcal{A})=\mathcal{A}$. We show that $\mathcal{A}$ is not $WAP$-biprojective. Suppose conversely that $\rho :{F}(\mathcal{A})\longrightarrow{F}_{\mathcal{A}}(\mathcal{A}\hat{\otimes}\mathcal{A})$ is a $wk^{\ast}$-$wk^{\ast}$ continuous $\mathcal{A}$-bimodule morphism such that $\Delta_{WAP}\circ\rho=id_{F(\mathcal{A})}$.  For every $i\in{X}$ consider $\rho(e_i)$, where $e_i$ is the element of $\mathcal{A}$ equal to $1$ at $i$ and $0$ elsewhere. Since $\eta_{\mathcal{A}\hat{\otimes}\mathcal{A}}:\mathcal{A}\hat{\otimes}\mathcal{A}\longrightarrow F_{\mathcal{A}}(\mathcal{A}\hat{\otimes}\mathcal{A})$ has a $wk^{\ast}$-dense range, there exists a bounded net $(u_{\alpha})$ in $\mathcal{A}\hat{\otimes}\mathcal{A}$ such that  $\rho(e_i)=wk^{\ast}\hbox{-}\lim\limits_{\alpha}\bar{u}_{\alpha}$. Since $\rho(e_i)=e_i\cdot\rho(e_i)\cdot e_i$, one can see that $\rho(e_i)=wk^{\ast}\hbox{-}\lim\limits_{\alpha}e_i\cdot\bar{u}_{\alpha}\cdot e_i=wk^{\ast}\hbox{-}\lim\limits_{\alpha}\lambda_{\alpha}\overline{{e}_i\otimes{e}_i}$ for some $(\lambda_{\alpha})\subseteq{\mathbb{C}}$. Since $\Delta_{WAP}$ is $wk^*$-continuous, 
\begin{equation*}
{e}_i=\Delta_{WAP}\circ\rho(e_i)=wk^{\ast}\hbox{-}\lim\limits_{\alpha}\lambda_{\alpha}\Delta_{WAP}(\overline{{e}_i\otimes{e}_i})=wk^{\ast}\hbox{-}\lim\limits_{\alpha}\lambda_{\alpha}\pi({e}_i\otimes{e}_i)=wk^{\ast}\hbox{-}\lim\limits_{\alpha}\lambda_{\alpha}{e}_i.
\end{equation*}
So  $\lambda_{\alpha}\overset{\vert\cdot\vert}{\longrightarrow}1$ in $\mathbb{C}$. So $\rho(e_i)=\overline{{e}_i\otimes{e}_i}$. Consider the identity operator $I:\mathcal{A}\rightarrow\mathcal{A}$, which can be viewed as an element of $(\mathcal{A}\hat{\otimes}\mathcal{A})^*$ \cite[\textsection3]{Daws:2006}. Define the map $\Phi:\mathcal{A}\hat{\otimes}\mathcal{A}\longrightarrow\mathcal{A}$ by $\Phi(a\otimes{b})=aI(b)$. We claim that $\Phi$ is weakly compact. We know that the unit ball of $\mathcal{A}\hat{\otimes}\mathcal{A}$ is the closure of the convex hull of $\{a\otimes{b} \ \colon \ \Vert{a}\Vert=\Vert{b}\Vert\leq1\}$. Since in a reflexive Banach space every bounded set
is relatively weakly compact, the set $\{ab \ \colon \ \Vert{a}\Vert=\Vert{b}\Vert\leq1\}$ is relatively weakly compact. So $\Phi$ is weakly compact. Applying \cite[Lemma 3.4]{Daws:2006}, we have $I\in{WAP(\mathcal{A}\hat{\otimes}\mathcal{A})^*}$. If $x=\sum\limits_{i\in{X}}\beta_{i}e_{i}$ is an element in $\mathcal{A}$, then $\rho(x)=\sum\limits_{i\in{X}}\beta_{i}\overline{{e}_i\otimes{e}_i}$. So
\begin{equation}\label{e3.1}
\langle{I},\rho(x)\rangle=\sum\limits_{i\in{X}}\beta_{i}\langle I,{e}_i\otimes{e}_i\rangle=\sum\limits_{i\in{X}}\beta_{i}\langle I(e_i),e_i\rangle=\sum\limits_{i\in{X}}\beta_{i}.
\end{equation}
We have 
\begin{equation*}
\vert\langle{I},\rho(x)\rangle\vert\leq\Vert{I}\Vert\Vert\rho\Vert\Vert{x}\Vert<\infty.
\end{equation*}
So by (\ref{e3.1}), $\sum\limits_{i\in{X}}\beta_{i}$ converges for every $x=\sum\limits_{i\in{X}}\beta_{i}e_{i}$ in ${\mathcal{A}}$. Then $\ell^2(X)\subset\ell^1(X)$, which is a contradiction with \cite[Proposition 6.11]{Fol:99}.	
\end{proof}
\begin{Remark}\label{R15}
	Let $H$ and $H^\prime$ be reflexive Banach algebras and suppose that $i:H\rightarrow H^\prime$ is an isomorphism. In a reflexive Banach space, by Banach-Alaoglu theorem every bounded sequence has a weakly convergence subsequence. So $WAP(H^*)=H^*$ and $WAP({H^\prime}^*)={H^\prime}^*$. Thus $F(i)=(i^*\vert_{WAP({H^\prime}^*)})^*=i^{**}$, where $F(-)$ is the functor from Banach algebras into the dual Banach algebras \cite[Remark 2.10]{Choi:2014}. Since $H^{**}=H$ and ${H^\prime}^{**}=H^{\prime}$, $F(i)=i$. So $i$ is a $wk^*$-continuous map.
\end{Remark}
\begin{Remark}
	Let $G$ be a locally compact group. Rickert showed that $L^2(G)$
 is a Banach algebra with convolution if and only if $G$ is compact \cite{Rick:68}.
\end{Remark}
\begin{Theorem}
	Let $G$ be an infinite commutative compact group. Then the Banach algebra $F(L^2(G))$ is not $WAP$-biprojective.
\end{Theorem}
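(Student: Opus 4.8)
The plan is to reduce the statement to Proposition~\ref{p14} by means of the Fourier transform. Since $G$ is an infinite compact \emph{abelian} group, its dual $\Gamma:=\widehat{G}$ is an infinite discrete group, and by the Plancherel theorem the Fourier transform is a unitary isomorphism $L^{2}(G)\to\ell^{2}(\Gamma)$. Because $\widehat{f\ast g}=\widehat{f}\,\widehat{g}$, this unitary carries convolution on $L^{2}(G)$ to pointwise multiplication on $\ell^{2}(\Gamma)$; thus, recalling (via Rickert's theorem, as noted above) that $L^{2}(G)$ is a Banach algebra under convolution, we obtain an isometric isomorphism of Banach algebras $i\colon L^{2}(G)\to\ell^{2}(\Gamma)$, where $\ell^{2}(\Gamma)$ carries the pointwise product and $\Gamma$ is an infinite set.

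Next I would transfer $WAP$-biprojectivity along $i$. Both $L^{2}(G)$ and $\ell^{2}(\Gamma)$ are reflexive Banach algebras, so by Remark~\ref{R15} the map $i$ is $wk^{\ast}$-continuous and $F(i)=i$; moreover $i\hat{\otimes}i\colon L^{2}(G)\hat{\otimes}L^{2}(G)\to\ell^{2}(\Gamma)\hat{\otimes}\ell^{2}(\Gamma)$ is an isometric isomorphism intertwining the two product maps $\pi$. Applying the functor $F(-)$ of \cite[Remark~2.10]{Choi:2014}, together with its action on bimodules \cite[Theorem~4.3 and Corollary~5.2]{Choi:2014}, to the pair $(i,i\hat{\otimes}i)$ yields isometric isomorphisms $F(L^{2}(G))\to F(\ell^{2}(\Gamma))$ and $F_{L^{2}(G)}(L^{2}(G)\hat{\otimes}L^{2}(G))\to F_{\ell^{2}(\Gamma)}(\ell^{2}(\Gamma)\hat{\otimes}\ell^{2}(\Gamma))$, each $wk^{\ast}$-$wk^{\ast}$ continuous with $wk^{\ast}$-$wk^{\ast}$ continuous inverse, which commute with the respective maps $\Delta_{WAP}$ and $\eta$ and intertwine the module actions via $i$. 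Hence a $wk^{\ast}$-$wk^{\ast}$ continuous $L^{2}(G)$-bimodule right inverse $\rho$ for $\Delta_{WAP}$ on the $L^{2}(G)$-side would be conjugated, through these isomorphisms, to a corresponding right inverse for $\Delta_{WAP}$ on the $\ell^{2}(\Gamma)$-side.

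Finally, Proposition~\ref{p14} applied with the infinite set $X=\Gamma$ asserts precisely that $F(\ell^{2}(\Gamma))$ is not $WAP$-biprojective, so no such right inverse exists there; therefore none exists for $F(L^{2}(G))$ either, which is the assertion. The Fourier-analytic input is entirely standard, so the only point requiring care is the naturality used in the second paragraph: one must verify that $F(-)$ and $F_{(-)}(-)$ are functorial for isometric algebra isomorphisms and that the square relating the two copies of $\Delta_{WAP}$ commutes. This is bookkeeping on top of \cite{Choi:2014}, but it is not quite automatic, since $L^{2}(G)\hat{\otimes}L^{2}(G)$ is \emph{not} reflexive and so $F_{L^{2}(G)}(L^{2}(G)\hat{\otimes}L^{2}(G))$ is not merely the module itself — one cannot shortcut it the way one does for $F(L^{2}(G))=L^{2}(G)$. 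I expect this to be the main obstacle.
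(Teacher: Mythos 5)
Your proposal follows essentially the same route as the paper: Plancherel's theorem identifies $L^{2}(G)$ with $\ell^{2}(\Gamma)$ as Banach algebras, Remark \ref{R15} gives $wk^{\ast}$-continuity of the identification, and Proposition \ref{p14} with $X=\Gamma$ finishes the argument. Your second paragraph is in fact more explicit than the paper about transferring the right inverse of $\Delta_{WAP}$ through the functor $F(-)$ (the paper simply asserts the transfer), and the obstacle you flag concerning the non-reflexivity of $L^{2}(G)\hat{\otimes}L^{2}(G)$ is a real point of care, but the argument is the same.
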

\begin{proof}
  By Plancherel's Theorem \cite[Theorem 1.6.1]{Rud:62}, $L^2(G)$ is isometrically isomorphic to $\ell^2(\Gamma)$, where $\Gamma$ is the dual group of $G$ and $\ell^2(\Gamma)$ is a Banach algebra with pointwise multiplication. Since $L^2(G)$ and $\ell^2(\Gamma)$ are Hilbert spaces, by Remark \ref{R15}, this isomorphism should be a $wk^*$-continuous map. By Proposition \ref{p14}, $\ell^2(\Gamma)$ is not $WAP$-biprojective. So $F(L^2(G))=L^2(G)$ is not $WAP$-biprojective.	
\end{proof}

\section{Examples}
The semigroup $S$ is weakly left (respectively, right) cancellative
if $s^{-1}F=\{x\in{S}:sx\in{F}\}$ (respectively, $Fs^{-1}=\{x\in{S}:xs\in{F}\}$) is finite for every $s\in{S}$ and every finite subset $F$ of $S$, and $S$ is weakly cancellative if it is both weakly
left cancellative and weakly right cancellative \cite[Definition 3.14]{Dales:2010}. 
\begin{Example}
	Let $S$ be the set  of natural numbers $\mathbb{N}$ with the binary operation $(m,n)\longmapsto\max\{m,n\}$, where $m$ and $n$ are in $\mathbb{N}$. Then $S$ is a weakly cancellative semigroup \cite[Example 3.36]{Dales:2010}. So $\ell^1(S)$ is a dual Banach algebra with the predual $c_{0}(S)$ \cite[Theorem 4.6]{Dales:2010}. Clearly $S$ is unital but it is not a group, so $\ell^1(S)$ is not Connes amenable \cite[Theorem 5.13]{Daws:2006}. Moreover $F(\ell^1(S))$ is not Connes amenable \cite[\textsection7.1]{Daws:2007}. Since $\ell^1(S)$ has a unit, by \cite[Lemma 2.7]{Daws:2007}, $F(\ell^1(S))$ has a unit. Applying Corollary \ref{c2.5},
	$F(\ell^1(S))$ is not $WAP$-biprojective.  
	
	Note that if we consider this semigroup with the binary operation $(m,n)\longmapsto\min\{m,n\}$, where $m$ and $n$ are in $\mathbb{N}$. Since $S$ is not a weakly cancellative semigroup, $\ell^1(S)$ is not a dual Banach algebra \cite[Theorem 4.6]{Dales:2010}. Moreover $F(\ell^1(S))$ is not Connes amenable \cite[Theorem 7.6]{Daws:2007}. Also $\ell^1(S)$ has a bounded approximate identity $(\delta_n)_{n\geq1}$, where $\delta_n$ is the characteristic function of $\{n\}$. By Lemma \ref{l5.9} and Corollary \ref{c2.5},
	$F(\ell^1(S))$ is not $WAP$-biprojective.
\end{Example}
\begin{Example}\label{ex3.2}
	Let $\mathcal{A}$ be a Banach space. Suppose that $\Lambda$ is a non-zero linear functional on $\mathcal{A}$ with $\Vert\Lambda\Vert\leq1$. Define $a\cdot b=\Lambda(a)b$ for every $a,b\in\mathcal{A}$. One can easily show that $(\mathcal{A},\cdot)$ is a Banach algebra and $\Delta(\mathcal{A})=\{\Lambda\}$. We show that the following statements hold:
	\begin{enumerate}
\item[(i)] Consider $x_{0}\in{\mathcal{A}}$ such that $\Lambda(x_{0})=1$, define a map $\psi:\mathcal{A}\longrightarrow\mathcal{A}\hat{\otimes}\mathcal{A}$ by $\psi(a)=x_{0}\otimes{a}$, where $a\in{\mathcal{A}}$. One can see that $\psi$ is a bounded $\mathcal{A}$-bimodule morphism and $\pi_\mathcal{A}\circ\psi=id_{\mathcal{A}}$. So $\mathcal{A}$ is biprojective and Theorem \ref{Th 5.5} (i) implies that $F(\mathcal{A})$ is $WAP$-biprojective.
\item[(ii)] We show that $F(\mathcal{A})=\mathcal{A}^{\ast\ast}$, to see this for every $\psi\in{\mathcal{A}^{\ast}}$ and $a\in{\mathcal{A}}$, the map $\mathcal{A}\rightarrow\mathcal{A}^{\ast}$, $a\mapsto\psi\cdot{a}$ is weakly compact. For every $b\in{\mathcal{A}}$ we have
\begin{equation*}
\langle{b},\psi\cdot{a}\rangle=\langle{a\cdot b},\psi\rangle=\langle{b},\Lambda(a)\psi\rangle.
\end{equation*}
Let $\{a_{n}\}$ be a bounded sequence in $\mathcal{A}$. Since $\Lambda$ is a bounded linear functional on $\mathcal{A}$, $\{\Lambda({a}_{n})\}$ is a bounded sequence in $\mathbb{C}$. So there exists a convergence subsequence $\{\Lambda({a}_{{n}_{k}})\}$ in  $\mathbb{C}$. Thus $\{\Lambda({a}_{{n}_{k}})\}\psi$ converges in $\mathcal{A}^{\ast}$. So $\psi\cdot{a_{{n}_{k}}}$ converges weakly in $\mathcal{A}^{\ast}$.
Applying \cite[Lemma 5.9]{Choi:2014}, $WAP(\mathcal{A}^{\ast})=\mathcal{A}^{\ast}$. Therefore $F(\mathcal{A})=\mathcal{A}^{\ast\ast}$. Also $\mathcal{A}$ is an Arens regular Banach algebra \cite[Theorem 1.4.11]{Pal:94}. 	
\item[(iii)] We claim that $F(\mathcal{A})$ is Connes amenable if and only if $\dim(\mathcal{A})=1$. If $F(\mathcal{A})=\mathcal{A}^{\ast\ast}$ is Connes amenable, then it has a unit. So $\mathcal{A}$ has a bounded approximate identity $(e_\alpha)$ \cite[Proposition 2.9.16 (iv)]{Dales:2000}. We have
\begin{equation*}
x_{0}=\lim\limits_{\alpha}x_{0}{e_\alpha}=\lim\limits_{\alpha}\Lambda(x_{0}){e_\alpha}=\lim\limits_{\alpha}{e_\alpha}.
\end{equation*}
Thus $\mathcal{A}$ has a unit and since for every $b\in{\mathcal{A}}$, $b=bx_{0}=\Lambda(b)x_{0}$, $\dim(\mathcal{A})=1$.\\ Conversely if $\dim(\mathcal{A})=1$, then $\mathcal{A}\cong\mathbb{C}$ as Banach algebra. So $F(\mathcal{A})\cong\mathbb{C}$ is Connes amenable.
\end{enumerate}
\end{Example}
\begin{Example}
	Set $\mathcal{A}=\left(\begin{array}{cc} 0&\mathbb{C}\\
	0&\mathbb{C}\\
	\end{array}
	\right)$. With the usual matrix multiplication and $\ell^{1}$-norm, $\mathcal{A}$ is a Banach algebra. Since $\mathbb{C}$ is a dual Banach algebra, $\mathcal{A}$ is a dual Banach algebra. Moreover $\mathcal{A}$ is a Hilbert space. By a similar argument as in Corollary \ref{C2.11} we have $F(\mathcal{A})=\mathcal{A}^{**}=\mathcal{A}$. Since $\mathcal{A}$ has a right identity but it does not have an identity, $F(\mathcal{A})=\mathcal{A}$
	is not Connes-amenable. We define a map $\tau:\mathcal{A}\longrightarrow\mathcal{A}\hat{\otimes}\mathcal{A}$ by $\left(\begin{array}{cc} 0&x\\
	0&y\\
	\end{array}
	\right)\longmapsto\left(\begin{array}{cc} 0&x\\
	0&y\\
	\end{array}
	\right)\otimes\left(\begin{array}{cc} 0&1\\
	0&1\\
	\end{array}
	\right)$. It is easy to see that $\tau$ is a bounded $\mathcal{A}$-bimodule morphism and also it is a right inverse for $\pi_{\mathcal{A}}$. So $\mathcal{A}$ is biprojective. By Theorem \ref{Th 5.5} (ii), $F(\mathcal{A})$ is $WAP$-biprojective and also Corollary \ref{C2.11} implies that it is Connes biprojective.	
\end{Example}

\begin{Example}
	Consider the Banach algebra $\ell^1$ of all sequences $a=(a(n))$ of complex numbers with 
	\begin{equation*}
	\Vert{a}\Vert:=\sum\limits_{n=1}^\infty\vert a(n)\vert<\infty,
	\end{equation*}
	and the following product
	\begin{equation*}
	(a\ast b)(n)=\left\{
	\begin{array}{ll}
	a(1)b(1)& \hbox{if}\quad n=1\\
	a(1)b(n)+b(1)a(n)+a(n)b(n)&\hbox{if}\quad n>1
	\end{array}
	\right.
	\end{equation*}
	for every $a,b\in{\ell^1}$. By similar argument as in \cite[Example 4.1]{Sha:17}, $(\ell^1,*)$ is a dual Banach algebra with respect to $c_0$. We claim that $(\ell^1,*)$ is not Connes amenable. Suppose conversely that $(\ell^1,*)$ is Connes amenable. Define $\varphi_1:\ell^1\longrightarrow\mathbb{C}$ by $\varphi_1(a)=a(1)$ for every $a\in{\ell^1}$. It is easy to see that $\varphi_1$ is a $wk^*$-continuous character on $(\ell^1,*)$. Using \cite[Theorem 2.2]{Mahmoodi:2014}, $(\ell^1,*)$ is $\varphi_1$-Connes amenable. Similar argument as in \cite[Example 4.1]{Sha:17} leads us to a contradiction. So $(\ell^1,*)$ is not Connes amenable and Corollary \ref{Cor2.7} implies that $F(\ell^1)$ is not Connes amenable. it is easy to see that $(\ell^1,*)$ has a unit $\delta_1$, where $\delta_1$ equal to $1$ at $n=1$ and $0$ elsewhere. So by Lemma \ref{l5.9}, $F(\ell^1)$ has a unit. Thus by Corollary \ref{c2.5}, $F(\ell^1)$ is not $WAP$-biprojective. 
\end{Example}
\begin{small}
	
\end{small}

\end{document}